\font\smallit=cmti10
\renewcommand\section{\@startsection {section}{1}{\z@}
{-30pt \@plus -1ex \@minus -.2ex}
{2.3ex \@plus.2ex}
{\normalfont\normalsize\bfseries}}
\renewcommand\subsection{\@startsection{subsection}{2}{\z@}
{-3.25ex\@plus -1ex \@minus -.2ex}
{1.5ex \@plus .2ex}
{\normalfont\normalsize\bfseries}}
\renewcommand{\@seccntformat}[1]{\csname the#1\endcsname. }
\newtheorem{theorem}{Theorem}
\newtheorem{lemma}{Lemma}
\newtheorem{conjecture}{Conjecture}
\newtheorem*{chernoff}{Chernoff Bound}
\DeclareMathOperator{\ord}{\textup{\textsf{ord}}}
\newcommand{\Par}{\mathcal{P}}
\newcommand{\Bin}{\textup{Bin}}
\newcommand{\rescaled}[1]{\partial{#1}}
\newcommand{\rescaleds}[2]{\partial^{#1}{#2}}
\newcommand{\Rnn}{\mathbb{R}_{\geq 0}}
\begin{document}

\begin{center}
\uppercase{\bf An exponential limit shape of random \lowercase{$q$}-proportion Bulgarian solitaire}
\vskip 20pt
{\bf Kimmo Eriksson}\\
{\smallit Mälardalen University, School of Education, Culture and Communication,\\Box 883, SE-72123 Västerås, Sweden}\\
{\tt kimmo.eriksson@mdh.se}\\
\vskip 10pt
{\bf Markus Jonsson\footnote{Corresponding author}}\\
{\smallit Mälardalen University, School of Education, Culture and Communication,\\Box 883, SE-72123 Västerås, Sweden}\\
{\tt markus.jonsson@mdh.se}\\
\vskip 10pt
{\bf Jonas Sjöstrand}\\
{\smallit Royal Institute of Technology, Department of Mathematics,\\SE-10044 Stockholm, Sweden}\\
{\tt jonass@kth.se}\\
\end{center}
\vskip 30pt

\centerline{\smallit \today}

\vskip 30pt

\centerline{\bf Abstract}

\noindent
We introduce \emph{$p_n$-random $q_n$-proportion Bulgarian solitaire} ($0<p_n,q_n\le 1$), played on $n$ cards distributed in piles. In each pile, a number of cards equal to the proportion $q_n$ of the pile size rounded upward to the nearest integer are candidates to be picked. Each candidate card is picked with probability $p_n$, independently of other candidate cards. This generalizes Popov's random Bulgarian solitaire, in which there is a single candidate card in each pile. Popov showed that a triangular limit shape is obtained for a fixed $p$ as $n$ tends to infinity. Here we let both $p_n$ and $q_n$ vary with $n$. We show that under the 
conditions $q_n^2 p_n n/{\log n}\rightarrow \infty$ and $p_n q_n \rightarrow 0$ as $n\to\infty$, the $p_n$-random $q_n$-proportion Bulgarian solitaire has an exponential limit shape. 

\pagestyle{myheadings}
\thispagestyle{empty}
\baselineskip=12.875pt
\vskip 30pt

\section{Introduction}
\label{sec:bulgsol}
The game of Bulgarian solitaire has received a great deal of attention, see reviews by Hopkins \cite{Hopkins2011} and Drensky \cite{drensky2015bulgarian}. The Bulgarian solitaire is played with a deck of $n$ identical cards divided arbitrarily into a number of piles. A move consists of picking a card from each pile and letting these cards form a new pile. If piles are sorted in order of decreasing size, every position in the solitaire is equivalent to a Young diagram of an integer partition of $n$.   

Popov \cite{Popov} considered a random version of Bulgarian solitaire defined by a probability $p\in(0,1]$, such that one card from each pile is picked with probability $p$, independently of the other piles. We will refer to this stochastic process on configurations as \emph{$p$-random} Bulgarian solitaire. The probabilities of configurations converge to a stationary distribution. Popov showed that as $n$ grows to infinity and configuration diagrams are downscaled by $\sqrt{n}$ in both dimensions, the stationary probability of the set of configurations that deviate from a triangle with slope $p$ by more than $\varepsilon>0$ tends to zero. In this sense, random configurations has a limit shape.

The objective of the present paper is to study such limit shapes in a generalization of random Bulgarian solitaire.

\subsection{$q_n$-proportion Bulgarian solitaire}
\label{sec:qn-Bulgarian}
Olson \cite{Olson2016} introduced a generalization of Bulgarian solitaire in which the number of cards that are picked from a pile of size $h$ is given by some non-negative valued function $\sigma(h)$. Eriksson, Jonsson and Sj{\"o}strand \cite{EJS} recently studied the special case when $\sigma$ is well-behaved in the sense that $\sigma(1)=1$ and both $\sigma(h)$ and $h-\sigma(h)$ are non-decreasing functions of $h$. In particular, they studied a special case that they called \emph{$q_n$-proportion Bulgarian solitaire}, defined by the rule $\sigma(h)=\lceil q_n h\rceil$. This means that from each pile we pick a number of  cards given by the proportion $q_n$ of the pile size rounded upward to the nearest integer. To illustrate the effect of the parameter $q_n$, set it to $0.3$ and consider the configuration $(6,2,2,1)$. From the first pile we pick $\lceil 0.3\times 6\rceil = 2$  cards; similar calculations give that 1 card is picked from each of the other three piles. Note that for $q_n\le 1/n$ exactly one card is always picked from each pile, retrieving the ordinary Bulgarian solitaire. 

As $n$ tends to infinity, Eriksson, Jonsson and Sj{\"o}strand \cite{EJS} determined limit shapes of stable configurations of  $q_n$-proportion Bulgarian solitaire: In case ${q_n^2 n\rightarrow 0}$, the limit shape is triangular, which generalizes the limit shape result for the ordinary Bulgarian solitaire. For other asymptotic behavior of $q_n$, other limit shapes were obtained. Specifically, in case $q_n^2 n\rightarrow \infty$, the limit shape is exponential. The intermediate case $q_n^2 n\rightarrow C>0$ produces a family of limit shapes  interpolating between the triangular and the exponential shape.

\subsubsection{$p_n$-random $q_n$-proportion Bulgarian solitaire}
We shall examine a $p_n$-random version of $q_n$-proportion Bulgarian solitaire, in which the proportion $q_n$ (rounded upward) of cards in a pile are only \emph{candidates} to be picked, each of which is picked only with probability $p_n$, independently of all other candidate cards. This process will be denoted by $\mathscr{B}(n,p_n,q_n)$. Note that in the special case of a fixed $p$ and for $q_n \le 1/n$, this process is equivalent to Popov's $p$-random Bulgarian solitaire. 

Our focus will be on establishing a regime in which $p_n$-random $q_n$-proportion Bulgarian solitaire has an exponential limit shape.

\section{The concept of limit shapes}\label{sc:limit-concept}
In this section we give the precise definitions of the limit shapes we consider. 
Let $\mathcal{P}(n)$ be the set of integer partitions of $n$.
For any partition $\lambda\in\mathcal{P}(n)$ with $N=N(\lambda)$ positive parts $\lambda_1 \ge \lambda_2 \ge \dotsc \ge \lambda_N>0$, define $\lambda_i=0$ for $i>N(\lambda)$, and the \emph{diagram} of $\lambda$ as the Young diagram oriented such that the parts of $\lambda$ are represented by left and bottom aligned columns, weakly decreasing in height from left to right. For example, \!\!\!\!
\ytableausetup{boxsize=5pt, aligntableaux=bottom}
\begin{ytableau}
	\hspace{0pt} \\
	\hspace{0pt} & & \\	
\end{ytableau}
is the diagram of the partition $(2,1,1)$.
We define the \textit{diagram-boundary function} of $\lambda$ as the nonnegative, weakly decreasing and piecewise constant function $\partial\lambda:\Rnn\rightarrow\Rnn$ describing the boundary of $\lambda$, given by
\[
\partial\lambda(x)=\lambda_{\lfloor x \rfloor+1}.
\]
Following \cite{Eriksson2012575} and \cite{VershikStatMech}, the diagram is downscaled using some \emph{scaling factor} ${a_n>0}$ such that all row lengths are multiplied by $1/a_n$ and all column heights are multiplied by $a_n/n$, yielding a constant area of 1. Following \cite{EJS}, we shall consistently make the choice $a_n = n/\lambda_1$, such that the height of the diagram is scaled to 1.
 
Thus, given a partition $\lambda$, define the \emph{rescaled diagram-boundary function} of $\lambda$ as the nonnegative, real-valued, weakly decreasing and piecewise constant function $\rescaled{\lambda}:\Rnn\rightarrow\Rnn$ given by
\begin{equation}
\label{eq:def_rescaled}
\rescaled{\lambda}(x)=\dfrac{1}{\lambda_1}\partial\lambda(xn/\lambda_1)=\frac{1}{\lambda_1}\lambda_{\lfloor xn/\lambda_1 \rfloor+1}.
\end{equation}

The $p_n$-random $q_n$-proportion Bulgarian solitaire $\mathscr{B}(n,p_n,q_n)$ (with $p_n,q_n \in(0,1]$) can be regarded as a Markov chain on the finite state-space $\Par(n)$. Let us denote the sequence of visited states by $(\lambda^{(0)},\lambda^{(1)},\dotsc)$. In the truly random case of $p_n<1$, it is straightforward to verify that this Markov chain is aperiodic and irreducible. It is well-known that an aperiodic and irreducible Markov chain on a finite state-space has a unique stationary distribution $\pi$ and that starting from any initial state the distribution of the $i$th state $\lambda^{(i)}$ converges to $\pi$ as $i$ tends to infinity. We denote by $\pi_{n,p_n,q_n}$ the stationary measure of the Markov chain $(\lambda^{(0)},\lambda^{(1)},\dotsc)$ on $\Par(n)$ given by $\mathscr{B}(n,p_n,q_n)$ for $p_n<1$.%
\footnote{Readers acquainted with the limit shape literature may wonder whether the stationary measure has the property of being \emph{multiplicative}, in the sense of interpretable as the product measure on the space of integer sequences restricted to a certain affine subspace \cite{fristedt1993structure}. The multiplicative property is useful in limit shape problems and related problems \cite{corteel1999multiplicity, erlihson2004reversible, goh2008random, pittel1997likely, VershikStatMech}. However, such techniques will not be used here as $\pi_{n,p_n,q_n}$ is unlikely to be multiplicative in general.} When we refer to a limit shape of the process $\mathscr{B}(n,p_n,q_n)$ for $p_n<1$ as $n$ grows to infinity, we shall mean the limit shape of the stationary measure $\pi_{n,p_n,q_n}$. The intuitive sense of this concept is that \emph{when the solitaire is played on a sufficiently large number of cards for sufficiently long the configuration will almost surely be very close to the limit shape after suitable downscaling}. Following Vershik \cite{VershikStatMech}, a sequence $\{\pi_n \}$ of probability distributions on $\Par(n)$ is said to have a limit shape $\phi$  if the downscaled diagrams approach $\phi$ in probability as $n$ grows to infinity. The exact condition for convergence can vary. Consistent with 
Yakubovich \cite{yakubovich2012ergodicity} and Eriksson and Sjöstrand \cite{Eriksson2012575},
we shall use the definition that
\begin{equation}
\label{eq:limitshape_def}
\lim_{n\rightarrow\infty}\pi_n\left\lbrace \lambda\in\Par(n): |\rescaleds{a_n}{\lambda}(x)-\phi(x)|<\varepsilon\right\rbrace=1
\end{equation}
for all $x>0$ and all $\varepsilon>0$.%
\footnote{Vershik \cite{VershikStatMech} and Erlihson and Granovsky \cite{erlihson2008limit} used a stronger condition for convergence toward a limit shape, namely that
\[
\lim_{n\rightarrow\infty}\pi_n\left\lbrace \lambda\in\Par(n): \sup_{x\in[a,b]} |\rescaleds{a_n}{\lambda}(x)-\phi(x)|<\varepsilon\right\rbrace=1
\]
should hold for any compact interval $[a,b]$, and any $\varepsilon>0$.
} 

\section{The approach of ordering piles by time of creation}
\label{sec:repr_compositions}
It will sometimes be useful to explicitly order piles by time of creation rather than by size. Here we develop this approach.

When parts are not sorted by size, a configuration is not represented by an integer partition but by a \emph{weak integer composition}: an infinite sequence $\alpha=(\alpha_1,\alpha_2,\dotsc)$, not necessarily decreasing, of nonnegative integers adding up to $n$. Let $\mathcal{W}(n)$ denote the set of weak compositions of $n$. 
We define the \textit{diagram}, the \textit{diagram-boundary function} $\partial\alpha$, and the \textit{rescaled diagram-boundary function} $\rescaled{\alpha}$ of a weak  composition $\alpha$ in exact analogy to the way we defined them for partitions in Section~\ref{sc:limit-concept}. For example, the diagram of $\alpha=(3,0,2,4,1,0,0,\dotsc)$ and the corresponding function graph $y=\partial\alpha(x)$ are shown in Figure~\ref{fig:psi_alpha_example}.
Also, for a weak composition $\alpha=(\alpha_1,\alpha_2,\dotsc,\alpha_N,0,0,\dotsc)$ we define the number of parts $N=N(\alpha)$ disregarding the trailing zeros.
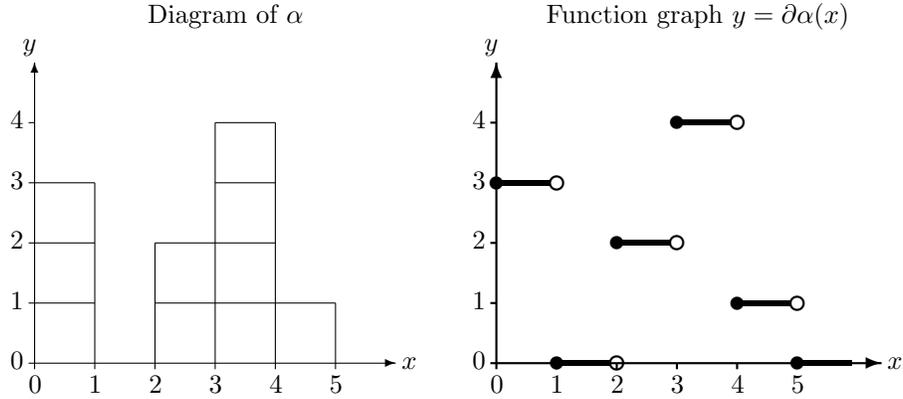
\begin{figure}[ht]
\setlength{\unitlength}{0.08cm}
\centering

\begin{tabular}{cc}
Diagram of $\alpha$ & Function graph $y=\partial\alpha(x)$ \\
\begin{picture}(70,60)
\put(4,5){\vector(0,1){50}}
\put(4,5){\vector(1,0){60}}
\put(3,0){$0$}
\put(4,4){\line(0,1){1}}
\put(13,0){$1$}
\put(14,4){\line(0,1){1}}
\put(23,0){$2$}
\put(24,4){\line(0,1){1}}
\put(33,0){$3$}
\put(34,4){\line(0,1){1}}
\put(43,0){$4$}
\put(44,4){\line(0,1){1}}
\put(53,0){$5$}
\put(54,4){\line(0,1){1}}

\put(0,4){$0$}
\put(3,5){\line(1,0){1}}
\put(0,14){$1$}
\put(3,15){\line(1,0){1}}
\put(0,24){$2$}
\put(3,25){\line(1,0){1}}
\put(0,34){$3$}
\put(3,35){\line(1,0){1}}
\put(0,44){$4$}
\put(3,45){\line(1,0){1}}
\put(14,5){\line(0,1){30}}
\put(24,5){\line(0,1){20}}
\put(34,5){\line(0,1){40}}
\put(44,5){\line(0,1){40}}
\put(54,5){\line(0,1){10}}

\put(4,15){\line(1,0){10}}
\put(24,15){\line(1,0){30}}
\put(4,25){\line(1,0){10}}
\put(24,25){\line(1,0){20}}
\put(34,35){\line(1,0){10}}
\put(34,45){\line(1,0){10}}
\put(4,35){\line(1,0){10}}

\put(65,4){$x$}
\put(2,57){$y$}
\end{picture}
\rule{0pt}{60pt} & 
\begin{picture}(75,60)
\thicklines
\put(4,5){\vector(0,1){50}}
\put(4,5){\vector(1,0){64}}

\put(3,0){$0$}
\put(4,4){\line(0,1){1}}
\put(13,0){$1$}
\put(14,4){\line(0,1){1}}
\put(23,0){$2$}
\put(24,4){\line(0,1){1}}
\put(33,0){$3$}
\put(34,4){\line(0,1){1}}
\put(43,0){$4$}
\put(44,4){\line(0,1){1}}
\put(53,0){$5$}

\put(0,4){$0$}
\put(3,5){\line(1,0){1}}
\put(0,14){$1$}
\put(3,15){\line(1,0){1}}
\put(0,24){$2$}
\put(3,25){\line(1,0){1}}
\put(0,34){$3$}
\put(0,44){$4$}
\put(3,45){\line(1,0){1}}

\linethickness{2pt}
\put(4,35){\circle*{2}}
\put(14,35){\circle{2}}
\put(4,35){\line(1,0){9}}

\put(14,5){\circle*{2}}
\put(24,5){\circle{2}}
\put(14,5){\line(1,0){9}}

\put(34,45){\circle*{2}}
\put(44,45){\circle{2}}
\put(34,45){\line(1,0){9}}

\put(24,25){\circle*{2}}
\put(34,25){\circle{2}}
\put(24,25){\line(1,0){9}}

\put(44,15){\circle*{2}}
\put(54,15){\circle{2}}
\put(44,15){\line(1,0){9}}

\put(54,5){\circle*{2}}
\put(54,5){\line(1,0){9}}

\put(69,4){$x$}
\put(2,57){$y$}
\end{picture}
\end{tabular}
\caption{The composition $\alpha=(3,0,2,4,1,0,0,\dotsc)\in\mathcal{W}(10)$.\label{fig:psi_alpha_example}}
\end{figure}


\subsection{Connecting the limit shapes of compositions and partitions}
\label{sec:thelemma}
We shall now connect compositions with partitions. For any $\alpha\in\mathcal{W}(n)$, define the operator $\ord$ as the ordering operator that arranges the parts of $\alpha$ in descending order, thus yielding a partition. We shall now prove that such sorting of the piles by size respects the convergence to a limit shape. The proof uses some basic theory of symmetric-decrea\-sing rearrangements, see for example  \cite[Ch.~10]{inequalities} or \cite[Ch.~3]{analysis}. For any measurable function $f\colon\mathbb{R}\rightarrow\Rnn$ such that $\lim_{x\rightarrow\pm\infty}f(x)=0$, there is a
unique function $f^\ast\colon\mathbb{R}\rightarrow\Rnn$, called the \emph{symmetric-decreasing rearrangement} of $f$,
with the following properties:
\begin{itemize}
\item $f^\ast$ is symmetric, that is, $f^\ast(-x)=f^\ast(x)$ for all $x$,
\item $f^\ast$ is weakly decreasing on the interval $[0,\infty)$,
\item $f^{\ast}$ and $f$ are equimeasurable, that is,
\[
\mathcal{L}(\{ x:\;f(x)>t \})=\mathcal{L}(\{ x:\;f^\ast(x)>t \})
\]
for all $t>0$, where $\mathcal{L}$ denotes the Lebesgue measure,
\item $f^{\ast}$ is lower semi-continuous.
\end{itemize}
In particular, if $f$ is a symmetric function that is weakly decreasing and right-continuous on $[0,\infty)$ and tends to $0$ at infinity, then $f^\ast=f$.
\begin{lemma}
\label{lem:reordering}
Let $\alpha\in\mathcal{W}(n)$ be a weak composition of $n$ and let $f:\Rnn\rightarrow \Rnn$ be a right-continuous and weakly decreasing function such that $f(x)\rightarrow 0$ as $x\rightarrow\infty$. The downscaled diagram-boundary functions before and after sorting of the weak composition satisfy the inequality
\[
  \| \rescaled{\ord \alpha}-f\|_{\infty}\le
  \| \rescaled{\alpha}-f\|_{\infty}.
\]
where $\|\cdot\|_{\infty}$ denotes the max-norm $\| f \|_{\infty}=\sup \big\{|f(x)|:x\geq 0\big\}$.
\end{lemma}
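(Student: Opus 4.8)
The plan is to read the inequality as the non-expansivity of monotone rearrangement in the max-norm, the point being that applying $\ord$ to $\alpha$ is exactly the passage to the (weakly) decreasing rearrangement of the step function $\rescaled{\alpha}$.

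First I would record the two structural observations that put us in the rearrangement setting. Since $\ord$ merely permutes the parts of $\alpha$, the largest part is unchanged, so $\alpha$ and $\ord\alpha$ share the same scaling factor $a_n = n/\max_i\alpha_i$ (the one that scales the diagram height to $1$) and hence the same step width $\max_i\alpha_i/n$. Consequently $\rescaled{\alpha}$ and $\rescaled{\ord\alpha}$ are built from the same multiset of values placed on abutting intervals of equal width, and $\rescaled{\ord\alpha}$ is precisely the right-continuous weakly decreasing rearrangement of $\rescaled{\alpha}$ on $\Rnn$ (equivalently, the even extension of $\rescaled{\ord\alpha}$ is the symmetric-decreasing rearrangement of the even extension of $\rescaled{\alpha}$). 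Second, $f$ is nonnegative, weakly decreasing, right-continuous and vanishes at infinity, so by the remark preceding the lemma it already equals its own rearrangement, $f^\ast = f$.

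The core of the argument is then the estimate $\|(\rescaled{\alpha})^\ast - f^\ast\|_{\infty} \le \|\rescaled{\alpha} - f\|_{\infty}$, which together with $f^\ast = f$ and $(\rescaled{\alpha})^\ast = \rescaled{\ord\alpha}$ is exactly the claim. Rather than quote this, I would prove it through distribution functions. Put $c = \|\rescaled{\alpha} - f\|_{\infty}$ and write $G(t) = \mathcal{L}\{x : \rescaled{\alpha}(x) > t\}$ and $F(t) = \mathcal{L}\{x : f(x) > t\}$; because $\rescaled{\ord\alpha}$ is the decreasing rearrangement of $\rescaled{\alpha}$, its distribution function is also $G$. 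The pointwise inequalities $f \le \rescaled{\alpha} + c$ and $\rescaled{\alpha} \le f + c$ translate into the level-set comparisons $F(t) \le G(t-c)$ and $G(t) \le F(t-c)$ for every $t$. Feeding these into the generalized-inverse representations $\rescaled{\ord\alpha}(x) = \inf\{t : G(t) \le x\}$ and $f(x) = \inf\{t : F(t) \le x\}$ (the latter is again the statement $f^\ast = f$) yields $\rescaled{\ord\alpha}(x) \le f(x) + c$ (from $G(t+c) \le F(t)$) and, symmetrically, $f(x) \le \rescaled{\ord\alpha}(x) + c$ (from $F(t+c) \le G(t)$); together these give $\|\rescaled{\ord\alpha} - f\|_{\infty} \le c$.

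I expect the only delicate point to be the behaviour at the thresholds: verifying that the inf-characterizations hold with the correct right-continuous conventions and that equalities at the crossing levels do not spoil the two comparisons. Because $\rescaled{\alpha}$ and $\rescaled{\ord\alpha}$ take only finitely many values on finitely many intervals, $G$ is an honest step function and these issues dissolve once one passes to one-sided limits $t \downarrow f(x)$ and $t \downarrow \rescaled{\ord\alpha}(x)$ (or, equivalently, replaces $c$ by $c + \varepsilon$ and lets $\varepsilon \to 0$). A more hands-on alternative that avoids rearrangement theory altogether would fix $x_0$, set $v = \rescaled{\ord\alpha}(x_0)$, and compare $\mathcal{L}\{\rescaled{\ord\alpha} \ge v\}$ with $\mathcal{L}\{\rescaled{\alpha} \ge v\}$ via equimeasurability, using the monotonicity of $f$ to confine the discrepancy to one side of $x_0$; this works too but needs the same care with the half-open steps, which is why I prefer routing the whole argument through the clean inf-representations.
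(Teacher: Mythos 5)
Your proof is correct and follows essentially the same route as the paper's: both identify $\rescaled{\ord\alpha}$ as the (symmetric-)decreasing rearrangement of $\rescaled{\alpha}$, observe $f^\ast=f$, and conclude from the non-expansivity of rearrangement in the sup-norm. The only difference is that the paper simply cites this contraction property (Lieb--Loss, Section 3.4), whereas you re-derive it via distribution functions and generalized inverses; your treatment of the threshold and right-continuity issues (replacing $c$ by $c+\varepsilon$ and letting $\varepsilon\to 0$) is sound, so this is a valid self-contained substitute for the citation.
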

\begin{proof}
The intuition of the lemma should be obvious from Figure~\ref{fig:reordering_proof}. To be able to use the standard machinery of symmetric rearrangements, we consider the functions
$f$, $\rescaled{\alpha}$, and $\rescaled{\ord \alpha}$ as being defined
on the entire real axis by letting $f(x)=f(|x|)$ and analogously for $\rescaled{\alpha}$, and $\rescaled{\ord \alpha}$.

Since $f(x)\rightarrow 0$ as $x\rightarrow\infty$, its symmetric-decreasing rearrangement $f^{\ast}$ is defined and,
since $f$ is weakly decreasing and lower semi-continuous, we have $f^{\ast}=f$.
Similarly, $\rescaled{\ord\alpha}(x)\rightarrow 0$ as $x\rightarrow\infty$ and is weakly decreasing, so
$(\rescaled{\ord\alpha})^{\ast}=\rescaled{\ord\alpha}$.
Moreover,  $(\rescaled{\alpha})^{\ast}=\rescaled{\ord\alpha}$ must hold because the operator $\ord$ arranges the composition parts in descending order.

Now, since symmetric rearrangements decrease $L^p$-distances for any $1\le p\le\infty$ (see for example \cite{analysis}, Section 3.4), we obtain
\[
\| \rescaled{\ord \alpha}-f\|_{\infty}=\| (\rescaled{\alpha})^{\ast}-f^{\ast}\|_{\infty}\le
\| \rescaled{\alpha}-f\|_{\infty}.
\]
\end{proof}

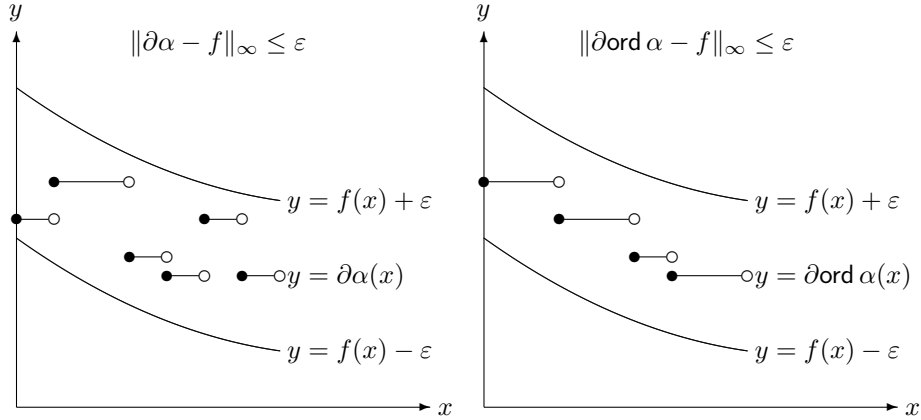
\begin{figure}[ht]
\setlength{\unitlength}{0.05cm}
\centering
\begin{tabular}{cc}
\begin{picture}(110,105)
\put(0,0){\vector(1,0){110}}
\put(0,0){\vector(0,1){100}}
\put(112,-2){$x$}
\put(-2,104){$y$}
\qbezier(0,85)(35,60)(70,55)
\qbezier(0,45)(35,20)(70,15)
\put(30,95){$\| \rescaled{\alpha}-f \|_{\infty} \leq \varepsilon$}
\put(72,53){$y=f(x)+\varepsilon$}
\put(72,33){$y=\rescaled{\alpha}(x)$}
\put(72,13){$y=f(x)-\varepsilon$}
\put(1,50){\line(1,0){7.5}}
\put(0,50){\circle*{3}}
\put(10,50){\circle{3}}

\put(11,60){\line(1,0){17.5}}
\put(10,60){\circle*{3}}
\put(30,60){\circle{3}}

\put(31,40){\line(1,0){7.5}}
\put(30,40){\circle*{3}}
\put(40,40){\circle{3}}

\put(41,35){\line(1,0){7.5}}
\put(40,35){\circle*{3}}
\put(50,35){\circle{3}}

\put(51,50){\line(1,0){7.5}}
\put(50,50){\circle*{3}}
\put(60,50){\circle{3}}

\put(61,35){\line(1,0){7.5}}
\put(60,35){\circle*{3}}
\put(70,35){\circle{3}}

\end{picture}
\hspace{5pt} &
\begin{picture}(110,105)
\put(0,0){\vector(1,0){110}}
\put(0,0){\vector(0,1){100}}
\put(112,-2){$x$}
\put(-2,104){$y$}
\qbezier(0,85)(35,60)(70,55)
\qbezier(0,45)(35,20)(70,15)
\put(25,95){$\| \rescaled{\ord\alpha}-f \|_{\infty} \leq \varepsilon$}
\put(72,53){$y=f(x)+\varepsilon$}
\put(72,33){$y=\rescaled{\ord\alpha}(x)$}
\put(72,13){$y=f(x)-\varepsilon$}
\put(1,60){\line(1,0){17.5}}
\put(0,60){\circle*{3}}
\put(20,60){\circle{3}}

\put(21,50){\line(1,0){17.5}}
\put(20,50){\circle*{3}}
\put(40,50){\circle{3}}

\put(41,40){\line(1,0){7.5}}
\put(40,40){\circle*{3}}
\put(50,40){\circle{3}}

\put(51,35){\line(1,0){17.5}}
\put(50,35){\circle*{3}}
\put(70,35){\circle{3}}

\end{picture}
\end{tabular}
\caption{An example of a composition $\alpha$ and a decreasing function $f$ showing that if $\rescaled{\alpha}(x)$ is enclosed between $f(x)-\varepsilon$ and $f(x)+\varepsilon$, then so is $\rescaled{\ord\alpha}(x)$, an immediate consequence of Lemma~\ref{lem:reordering}.  \label{fig:reordering_proof}}
\end{figure}

Clearly, Lemma~\ref{lem:reordering} holds true also when the max-norm is replaced by the weaker 
convergence condition used in our limit shape definition \eqref{eq:limitshape_def}.
\begin{lemma}
\label{lem:reordering_cor}
For any distribution $\pi_n$ on $\mathcal{W}(n)$, define a corresponding distribution $\tilde{\rho}^{(n)}$ on $\Par(n)$  by
\begin{equation}
\label{eq:reordering_cor}
\tilde{\rho}^{(n)}(\lambda)=\sum_{\substack{ \alpha\in\mathcal{W}(n) \\ \ord\alpha=\lambda }}\pi_n(\alpha).
\end{equation}
If $\phi$ is a limit shape of $\pi_n$ on $\mathcal{W}(n)$ then $\phi $ is also a limit shape of $\tilde{\rho}^{(n)}$ on $\Par(n)$.
\end{lemma}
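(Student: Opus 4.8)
The plan is to deduce the limit-shape property \eqref{eq:limitshape_def} for $\tilde{\rho}^{(n)}$ from that of $\pi_n$ by transporting the convergence through the rearrangement operator $\ord$. First I would record the change-of-variables identity that makes \eqref{eq:reordering_cor} usable: since $\tilde{\rho}^{(n)}$ is the pushforward of $\pi_n$ along $\ord$, for any property $P$ depending only on a partition we have $\tilde{\rho}^{(n)}\{\lambda:\ P(\lambda)\}=\pi_n\{\alpha:\ P(\ord\alpha)\}$. Hence, fixing $x_0>0$ and $\varepsilon>0$, it suffices to show $\pi_n\{\alpha:\ |\rescaled{\ord\alpha}(x_0)-\phi(x_0)|<\varepsilon\}\to 1$. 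The naive attempt — to apply the max-norm bound of Lemma~\ref{lem:reordering} on the event $\{\|\rescaled{\alpha}-\phi\|_{\infty}<\varepsilon\}$ — fails, and this is exactly the main obstacle: the hypothesis \eqref{eq:limitshape_def} only controls $\rescaled{\alpha}$ at each single point in probability, whereas sorting is a global operation, so single-point control is not preserved and the uniform event $\{\|\rescaled{\alpha}-\phi\|_{\infty}<\varepsilon\}$ need not have probability tending to $1$.

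To get around this I would pass to an integral norm with which the rearrangement interacts well. The key points are that $\rescaled{\ord\alpha}$ is precisely the decreasing rearrangement of $\rescaled{\alpha}$, and that the rescaling normalizes areas, so $\int_0^\infty\rescaled{\alpha}=\int_0^\infty\rescaled{\ord\alpha}=1$ for every $\alpha$, with $\int_0^\infty\phi=1$ as well (i.e.\ no mass escapes to infinity, which holds for the shapes at issue). I would first upgrade the pointwise hypothesis to convergence in mean in $L^1$, namely $\mathbb{E}_{\pi_n}\|\rescaled{\alpha}-\phi\|_{L^1}\to 0$. Since $\rescaled{\alpha}(x),\phi(x)\in[0,1]$, at each fixed $x$ the statement \eqref{eq:limitshape_def} gives $\mathbb{E}_{\pi_n}|\rescaled{\alpha}(x)-\phi(x)|\to 0$ by bounded convergence; integrating this over a bounded window $[0,T]$ is immediate, and the tail $\int_T^\infty$ is controlled by the two area identities, because $\int_T^\infty\mathbb{E}_{\pi_n}\rescaled{\alpha}=1-\int_0^T\mathbb{E}_{\pi_n}\rescaled{\alpha}\to 1-\int_0^T\phi=\int_T^\infty\phi$, which is small for large $T$.

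With $L^1$-convergence in hand, I would invoke the nonexpansiveness of symmetric-decreasing rearrangement in $L^1$ — the $p=1$ instance of the same fact used to prove Lemma~\ref{lem:reordering} — to conclude $\|\rescaled{\ord\alpha}-\phi\|_{L^1}=\|(\rescaled{\alpha})^\ast-\phi^\ast\|_{L^1}\le\|\rescaled{\alpha}-\phi\|_{L^1}$, so that $\rescaled{\ord\alpha}\to\phi$ in $L^1$ in mean, and hence in probability. The final step converts this back to the pointwise conclusion: because $\rescaled{\ord\alpha}$ is weakly decreasing and $\phi$ is weakly decreasing and continuous at $x_0$, $L^1$-closeness forces $|\rescaled{\ord\alpha}(x_0)-\phi(x_0)|<\varepsilon$ with high probability. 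Indeed, if $\rescaled{\ord\alpha}(x_0)$ exceeded $\phi(x_0)+\varepsilon$, monotonicity together with continuity of $\phi$ would force $\rescaled{\ord\alpha}-\phi$ to be at least $\varepsilon/2$ on a whole interval to the left of $x_0$, contributing a fixed positive amount to the $L^1$-distance, and symmetrically on the right for the lower bound; this yields $\pi_n\{|\rescaled{\ord\alpha}(x_0)-\phi(x_0)|<\varepsilon\}\to1$, which is \eqref{eq:limitshape_def} for $\tilde{\rho}^{(n)}$.

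I expect the crux to be the mismatch flagged above between the pointwise form of \eqref{eq:limitshape_def} and the global nature of sorting. The device that resolves it is to work in $L^1$, where rearrangement is nonexpansive and where the exact area normalization supplies the needed tail bound; converting $L^1$-convergence back to pointwise convergence then costs only the mild and, for the exponential limit shapes considered here, automatic assumption that $\phi$ is continuous at $x_0$.
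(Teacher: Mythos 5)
Your skeleton agrees with the paper's: the identity $\tilde{\rho}^{(n)}\{\lambda:P(\lambda)\}=\pi_n\{\alpha:P(\ord\alpha)\}$ is exactly what the paper establishes by decomposing the relevant event into $\ord$-equivalence classes, and both arguments reduce the lemma to transferring the pointwise limit-shape property from $\rescaled{\alpha}$ to $\rescaled{\ord\alpha}$. Where you part ways is in how that transfer is made. The paper does it in one line, invoking Lemma~\ref{lem:reordering} together with the preceding remark that the max-norm inequality ``holds true also'' under the weaker condition \eqref{eq:limitshape_def}; you decline that shortcut on the grounds that single-point control in probability is not obviously preserved by the global sorting operation, and instead detour through $L^1$: upgrade \eqref{eq:limitshape_def} to convergence in mean in $L^1$, apply the $L^1$ non-expansiveness of symmetric-decreasing rearrangement, and descend back to a pointwise statement at $x_0$ via monotonicity of $\rescaled{\ord\alpha}$ and continuity of $\phi$. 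This is a genuinely different and more cautious route, and your diagnosis of the soft spot in the direct transfer is fair --- the paper's pointwise definition really is weaker than what Lemma~\ref{lem:reordering} manipulates.

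However, your detour quietly imports three hypotheses the lemma does not grant. First, $\rescaled{\alpha}(x)\in[0,1]$ is false for a general weak composition: the rescaling divides by the \emph{first} part $\alpha_1$, not the largest, so $\rescaled{\alpha}$ can be as large as $n/\alpha_1$, and your bounded-convergence step $\mathbb{E}_{\pi_n}|\rescaled{\alpha}(x)-\phi(x)|\to 0$ then needs a uniform integrability input that convergence in probability alone does not supply. Second, the tail estimate needs $\int_0^\infty\phi=1$, i.e.\ that no area escapes to $0$ or $\infty$; you flag this yourself, but it is an added assumption and cannot be dropped, since the rearrangement inequality is only available for the norm over the whole line. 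Third, the final descent from $L^1$ to the value at $x_0$ uses continuity of $\phi$ at $x_0$, whereas the setting only guarantees that $\phi$ is weakly decreasing and right-continuous. All three extra hypotheses hold in the paper's application ($\phi(x)=e^{-x}$, and the solitaire's compositions have first part equal to the maximal part up to a factor $1+o(1)$ a.a.s.), so your argument suffices for the use the paper makes of the lemma; but as a proof of the lemma as stated it is incomplete, and you should either add these hypotheses explicitly or explain how to remove them.
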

\begin{proof}
The assumption that $\phi$ is a limit shape of the distribution $\pi_n$ on $\mathcal{W}(n)$  means that
\[
\lim_{n\rightarrow\infty}\pi_n\left\lbrace \alpha\in\mathcal{W}(n): |\rescaled{\alpha}(x)-\phi(x)| < \varepsilon\right\rbrace=1.
\]
for all $x>0$. By virtue of Lemma~\ref{lem:reordering} we can replace $\alpha$ with $\ord\alpha$ in this formula:
\begin{equation}
\label{eq:lemmaproof1}
\lim_{n\rightarrow\infty}\pi_n\left\lbrace \alpha\in\mathcal{W}(n): |(\rescaled{\ord\alpha})(x)-\phi(x)| < \varepsilon\right\rbrace=1.
\end{equation}
The set 
$A:=\left\lbrace \alpha\in\mathcal{W}(n): |(\rescaled{\ord\alpha})(x)-\phi(x)| < \varepsilon
\text{ for all }x>0 \right\rbrace$
can be written as a disjoint union of equivalence classes with respect to sorting:
\[
A=\bigcup_{\lambda\in L} \{ \alpha\in\mathcal{W}(n):\, \ord\alpha=\lambda \}
\]
where $L=\{ \lambda\in\Par(n):\, |\rescaled{\lambda}(x)-\phi(x)| < \varepsilon \text{ for all }x>0 \}$. The $\pi_n$-probability measure of $A$ is
\begin{align*}
\pi_n(A)
& = \pi_n\left( \bigcup_{\lambda\in L} \{ \alpha\in\mathcal{W}(n):\, \ord\alpha=\lambda \} \right) \\
& = \sum_{\lambda\in L} \pi_n\{ \alpha\in\mathcal{W}(n):\, \ord\alpha=\lambda \} \\
& = \sum_{\lambda\in L} \; \sum_{\substack{  \alpha\in\mathcal{W}(n) \\ \ord\alpha=\lambda }} \pi_n(\alpha) \\
& = \sum_{\lambda\in L} \tilde{\rho}^{(n)}(\lambda) && \text{(by \eqref{eq:reordering_cor})} \\
& = \tilde{\rho}^{(n)}(L).
\end{align*}
From \eqref{eq:lemmaproof1} we have that $\lim_{n\rightarrow\infty}\pi_n(A)=1$. Because $\pi_n(A)=\tilde{\rho}^{(n)}(L)$, we can conclude that also ${\lim_{n\rightarrow\infty}\tilde{\rho}^{(n)}(L)=1}$, that is,
\[
  \lim_{n\rightarrow\infty}\tilde{\rho}^{(n)}\left\lbrace \lambda\in\mathcal\Par(n): |\rescaled{\lambda}(x)-\phi(x)|<\varepsilon \text{ for all }x>0 \right\rbrace = 1.
\]
This means that $\phi$ is a limit shape of the distribution $\pi_n$ on $\Par(n)$.
\end{proof}

\section{Three regimes}\label{sec:regimes}
Recall from Section~\ref{sec:qn-Bulgarian} the $q_n$-proportion Bulgarian solitaire developed in \cite{EJS}, where the limit shape is triangular when $q_n^2 n\rightarrow 0$, exponential when $q_n^2 n\rightarrow \infty$
and an interpolation between the two when $q_n^2 n\rightarrow C>0$.

The $p_n$-random $q_n$-proportion Bulgarian solitaire seems to share this property of three regimes of limit shapes. Specifically, in Section~\ref{sec:conj} we conjecture the limit shape to be triangular when $p_n q_n^2 n\to 0$, exponential when $p_n q_n^2 n\to\infty$ and an interpolation between the two (a piecewise linear function graph that depends on $C$) when $p_n q_n^2 n\to C>0$.

The focus in this paper is the exponential regime of the $p_n$-random $q_n$-candidate Bulgarian solitaire,
i.e.\ the case $p_n q_n^2 n\to\infty$ as $n\to\infty$. However, with the proof technique we employ we will prove the stronger statement that the limit shape holds even when the configurations are considered elements of $\mathcal{W}(n)$, i.e.\ even without sorting the piles of a configuration according to size to create a partition in $\mathcal{P}(n)$. We will instead require the stronger condition $p_n q_n^2 n/{\log n}\to\infty$ as $n\to\infty$. By virtue of Lemma~\ref{lem:reordering_cor}, the limit shape will also hold for partitions.


\section{The exponential limit shape}\label{sec:exp-limit}
Here we investigate the limit shape of configurations in the $p_n$-random $q_n$-candidate
Bulgarian solitaire $\mathscr{B}(n,p_n,q_n)$ in the regime
\begin{equation}
\label{eq:npq2/logn}
\frac{p_n q_n^2 n}{\log n}\to\infty \text{ as }n\to\infty.
\end{equation}
Our main result, Theorem~\ref{thm:main_new}, says that, under the additional asymptotic property $p_n q_n \to 0$ as $n\rightarrow\infty$, the boundary function of the diagram, downscaled, will
resemble the exponential shape $e^{-x}$ asymptotically almost surely (a.a.s.), i.e.\ with a probability that tends to 1 as $n\to\infty$. See Figure~\ref{fig:sim}. Throughout this section, ``a.a.s.'' can be read as ``with a probability that tends to 1 as $n\to\infty$''. Also, the asymptotic notations $o$ and $O$ will always be with respect to $n\to\infty$.

\begin{figure}[ht]
	\setlength{\unitlength}{0.08cm}
	\centering
	\begin{picture}(153,70)	
	\put(0,0){\includegraphics[scale=0.7,clip=true,trim=60 285 40 335]{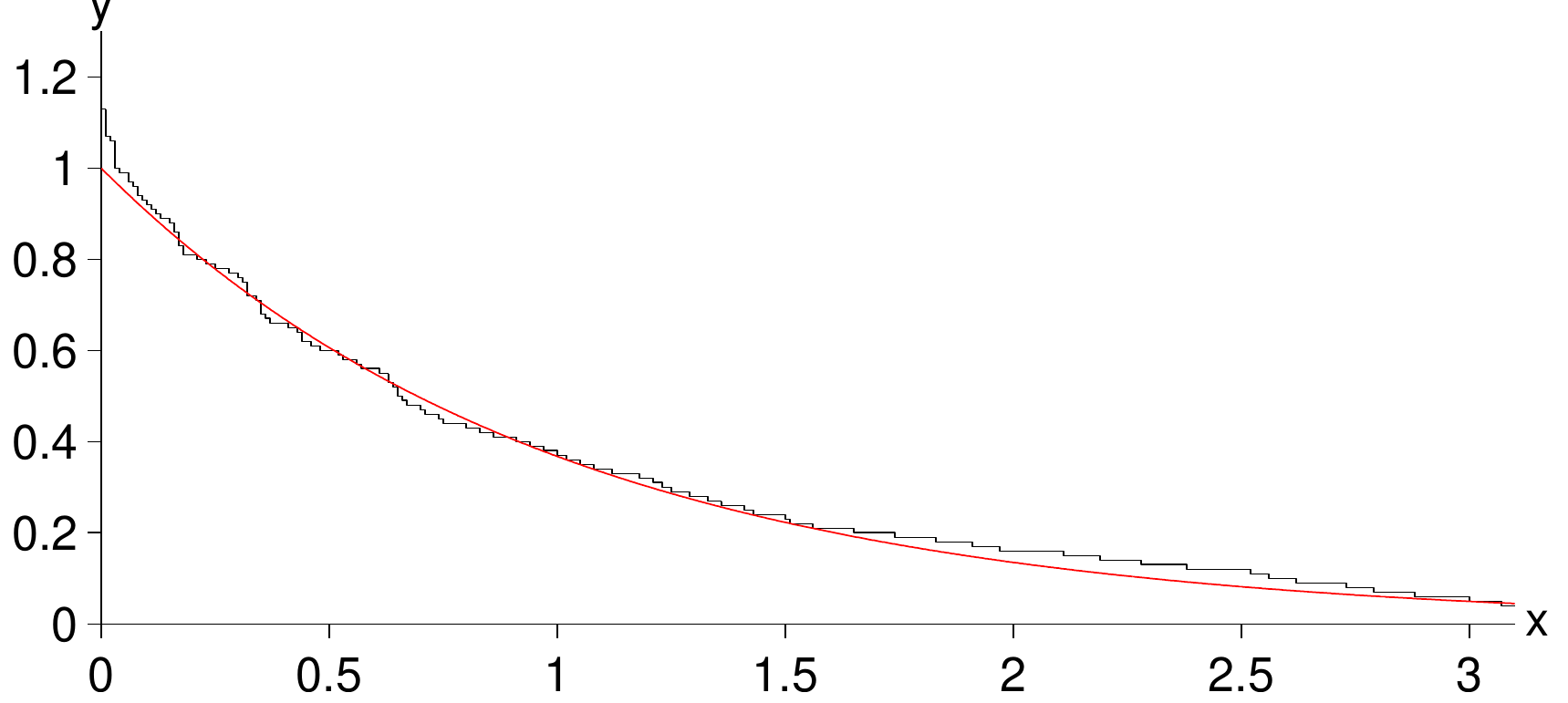}}
	\end{picture}
	\caption{The result of a computer simulation after 200 moves of $p_n$-random $q_n$-proportion Bulgarian solitaire in the case $q_n=1$, with $n=10^5$ cards and $p_n=0.01$, starting from a triangular configuration. The jagged curve is the rescaled diagram-boundary function of the resulting configuration and the smooth curve is the limit shape $y=e^{-x}$. \label{fig:sim}}
\end{figure}

We shall see that the condition $p_n q_n^2 n/{\log n}\to\infty$ implies that the rounding effect in computing the number of candidate cards is negligible. Thus, the number of candidate cards will tend to $q_nn$ as $n\to\infty$. This in turn means that the expected number of picked cards will eventually be close to $p_n q_n n$, thus $\lambda_1\approx p_n q_n n$ is the size of the pile created in a move of the solitaire. Recall from Section \ref{sc:limit-concept} that the scaling factor we employ is $a_n = n/\lambda_1 = \frac{1}{p_nq_n}$. Thus, if $p_n q_n$ is bounded away from zero, then the scaling $\frac{1}{p_nq_n}$ is bounded and hence cannot transform the jumpy boundary diagrams into a smooth limit shape. Therefore, we also require
\begin{equation}
\label{eq:pq}
p_n q_n \rightarrow 0 \text{ as }n\rightarrow\infty.
\end{equation}
On the other hand, if $p_nq_n$ tends to zero too fast, the pile sizes will be small and their random fluctuations will be large. For instance, the new pile after each move has a size drawn from the binomial distribution $\Bin(K,p_n)$, where $K\approx q_nn$ is the number of candidate cards, with relative standard deviation $\sim 1/\sqrt{p_n q_n n}$. The requirement \eqref{eq:npq2/logn} guarantees that $p_nq_n$ does not tend to zero too fast.

\begin{theorem}
	\label{thm:main_new}
	For each positive integer $n$, pick $q_n$ and $p_n$ with $0<p_n,q_n\le 1$ and a (possibly random)
	initial configuration $\lambda^{(0)}\in\Par(n)$. Let $(\lambda^{(0)},\lambda^{(1)},\dotsc)$ be the Markov chain on $\Par(n)$ defined by $\mathscr{B}(n,p_n,q_n)$, and denote its stationary measure by $\pi_{n,p_n,q_n}$. Suppose
	\[
	p_n q_n\rightarrow 0 \quad\text{and}\quad \frac{p_n q_n^2 n}{\log n} \rightarrow\infty \quad\text{as } n\rightarrow\infty.
	\]
	Then $\pi_{n,p_n,q_n}$ has the limit shape $e^{-x}$ under the scaling $a_n=(p_n q_n)^{-1}$.
\end{theorem}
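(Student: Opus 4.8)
The plan is to pass to the representation of configurations as weak compositions ordered by time of creation, analyze the stationary law there, and then invoke Lemma~\ref{lem:reordering_cor} to transfer the exponential shape back to $\Par(n)$. In the stationary regime write $Y_a$ for the current size of the pile created $a$ moves ago, so that the creation-time composition is $\alpha=(Y_0,Y_1,Y_2,\dotsc)$ and, under the deterministic scaling $a_n=(p_nq_n)^{-1}$,
\[
\rescaleds{a_n}{\alpha}(x)=\frac{a_n}{n}\,\alpha_{\lfloor a_n x\rfloor+1}=\frac{1}{p_nq_n n}\,Y_{\lfloor x/(p_nq_n)\rfloor}.
\]
Since the limit-shape condition \eqref{eq:limitshape_def} is pointwise in $x$, it suffices to fix $x>0$, set $a=\lfloor x/(p_nq_n)\rfloor$, and show that $Y_a=p_nq_n n\,e^{-x}(1+o(1))$ a.a.s. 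The key structural observation is that a pile evolves \emph{autonomously}: the number of cards leaving a pile of current size $h$ in one move is $\Bin(\lceil q_n h\rceil,p_n)$, which depends only on $h$ and on fresh randomness, so distinct piles decay independently once created, and the single-pile size process $S_{t+1}=S_t-\Bin(\lceil q_n S_t\rceil,p_n)$ is a Markov chain in its own right. Thus $Y_a$ is the value at age $a$ of such a chain started from the initial size $I_{-a}$ of the pile born at time $-a$, driven by randomness independent of $I_{-a}$.

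First I would establish that every newly created pile has size $p_nq_n n(1+o(1))$ a.a.s. The size of a pile created at a given time equals the total number of cards picked in that move, a sum of independent Bernoulli$(p_n)$ variables over all candidate cards; the number of candidates is $\sum_j\lceil q_n\lambda_j\rceil\in[q_n n,\,q_n n+N]$, where $N$ is the number of nonempty piles. A crude but unconditional decay bound controls $N$ and breaks any apparent circularity: since rounding up can only accelerate decay, $\mathbb{E}[S_t\mid S_0]\le S_0(1-p_nq_n)^t\le n(1-p_nq_n)^t$, so by Markov's inequality a pile is empty after $T^\ast=3\log n/(p_nq_n)$ moves with probability $1-O(n^{-2})$; a union bound over piles then gives $N\le T^\ast=O\!\big(\log n/(p_nq_n)\big)$ a.a.s. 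Condition \eqref{eq:npq2/logn} is exactly what makes this negligible, $N=o(q_n n)$, so the candidate count is $q_n n(1+o(1))$, and the Chernoff bound (using $p_nq_n n\ge p_nq_n^2 n\gg\log n$) yields the concentration of each initial size, in particular $I_{-a}=p_nq_n n(1+o(1))$.

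Next I would prove the decay-concentration estimate: conditional on $S_0=p_nq_n n(1+o(1))$, the chain satisfies $S_a=S_0(1-p_nq_n)^a(1+o(1))$ a.a.s. Writing $\lceil q_n S_t\rceil=q_n S_t+r_t$ with $r_t\in[0,1)$ and setting $Z_t=S_t(1-p_nq_n)^{-t}$, one checks that $Z_t$ is a supermartingale whose total downward drift from the rounding terms is $O\!\big(e^{x}/q_n\big)=o(p_nq_n n)$ by \eqref{eq:npq2/logn}, and whose increments have summed conditional variance $O\!\big(p_nq_n n\,e^{x}\big)$, so that the relative standard deviation of $Z_a$ is $O\!\big(e^{x/2}/\sqrt{p_nq_n n}\big)=o(1)$. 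Chebyshev's inequality then gives $Z_a=p_nq_n n(1+o(1))$, hence $S_a=p_nq_n n(1-p_nq_n)^a(1+o(1))$, a.a.s. Since $a=\lfloor x/(p_nq_n)\rfloor$ and $p_nq_n\to0$ by \eqref{eq:pq}, we have $(1-p_nq_n)^a\to e^{-x}$, so $Y_a=p_nq_n n\,e^{-x}(1+o(1))$ and $\rescaleds{a_n}{\alpha}(x)\to e^{-x}$ a.a.s. The two a.a.s.\ events---concentration of $I_{-a}$ (depending on randomness up to move $-a$) and concentration of the subsequent decay (depending on moves $-a+1,\dotsc,0$)---involve disjoint randomness and so may be combined, completing the pointwise statement for the creation-time composition; Lemma~\ref{lem:reordering_cor} then transfers the exponential limit shape to $\pi_{n,p_n,q_n}$ on $\Par(n)$.

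The main obstacle I anticipate is the decay-concentration step, where multiplicative fluctuations accumulate over $\Theta(x/(p_nq_n))$ moves and must be confined to a $1+o(1)$ factor; the careful bookkeeping of the supermartingale's drift and variance against the twin conditions \eqref{eq:npq2/logn} and \eqref{eq:pq} is the delicate part. A secondary difficulty is making the initial-size concentration rigorous in the stationary regime, which is resolved above by the unconditional single-pile decay bound controlling $N$ before any knowledge of the limiting shape is invoked.
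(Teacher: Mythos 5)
Your architecture is genuinely different from the paper's and in places leaner: your bound on the number of piles ($E[S_t\mid S_0]\le S_0(1-p_nq_n)^t$ plus Markov, so every pile dies within $T^*=3\log n/(p_nq_n)$ moves with probability $1-O(n^{-2})$, whence $N\le T^*=o(q_nn)$ by \eqref{eq:npq2/logn}) is cleaner than the paper's Lemma~\ref{lem:D}, which treats piles above and below size $q_n^{-1}\log n$ separately; and where the paper couples the single-pile chain $S_{t+1}=S_t-\Bin(\lceil q_nS_t\rceil,p_n)$ with $s$-threshold processes (Lemma~\ref{lem:combinatorial}) precisely so that the $r$-step survival count becomes an honest binomial amenable to Chernoff (Lemmas~\ref{lem:rchunk} and~\ref{lem:mellan}), you analyze the chain directly via the supermartingale $Z_t=S_t(1-p_nq_n)^{-t}$. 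Your drift and variance computations are correct, and working directly in the stationary regime lets you dispense with the $D$-and-$M$ bookkeeping of \eqref{eq:DM}. This does establish the pointwise-in-$x$ statement for the creation-time composition.

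The gap is the strength of the concentration, and it bites exactly where you invoke Lemma~\ref{lem:reordering_cor}. Chebyshev gives a per-pile failure probability of order $e^{2x}/(\varepsilon^2p_nq_nn)$. But Lemma~\ref{lem:reordering} is a sup-norm inequality: sorting preserves closeness to $e^{-x}$ only when the composition is close to $e^{-x}$ at \emph{all} $x$ simultaneously, and the paper's proof of Lemma~\ref{lem:reordering_cor} correspondingly works with the event ``$|\rescaled{\ord\alpha}(x)-\phi(x)|<\varepsilon$ for all $x>0$''. To give that event probability $1-o(1)$ you must control all $\Theta(1/(p_nq_n))$ piles of age up to $C/(p_nq_n)$ at once, and the union bound over your Chebyshev estimates costs $\Theta\bigl(1/(\varepsilon^2p_n^2q_n^2n)\bigr)$, which does not tend to zero under the theorem's hypotheses: take $q_n=1$ and $p_n=\log^2n/n$, which satisfies \eqref{eq:npq2/logn} and \eqref{eq:pq} while $p_n^2q_n^2n=\log^4n/n\to0$. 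So second moments cannot deliver the simultaneous control; you need exponential tails for the $\Theta(1/(p_nq_n))$-step decay of a single pile. That is nontrivial precisely because the candidate set $\lceil q_nS_t\rceil$ is adaptive: a step-by-step Chernoff bound accumulates errors of order $\sqrt{p_nq_nS_t\log n}$ per step over $1/(p_nq_n)$ steps and again fails for small $p_n$, while a Freedman-type bound on your supermartingale requires truncating the increments before it becomes useful. The paper's $r$-chunk threshold-process machinery, with $r_n$ tuned as in \eqref{eq:rn}, exists exactly to jump $r_n$ steps at a time and sidestep this accumulation. To close the argument you should either upgrade the decay estimate to an exponential bound (truncated Freedman, or adopt the threshold-process coupling), or else avoid sorting-uniformity altogether by controlling the counting function $\#\{i:\alpha_i\ge t\}$ directly through the conditional independence of the piles' decays and reading off the order statistics from it; as written, neither is done.
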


%

The proof of Theorem~\ref{thm:main_new} heavily relies on the following version of Chernoff bounds.
For a proof, see for example \cite{mitzenmacher2005probability}.
\begin{chernoff}
	For $n\ge 1$ and $0<p\le 1$, let $X\sim\Bin(n,p)$ and set $\mu=E(X)=np$. Then, for any $0<\gamma<\mu$,
	\begin{equation}
	\label{eq:chernoff}
	P(\left|X-\mu\right|\geq \gamma) \leq 2\exp\left(-\frac{\gamma^2}{3\mu}\right).
	\end{equation}
\end{chernoff}
The idea of the proof of Theorem~\ref{thm:main_new} is the following. 

We will use the approach developed in Section~\ref{sec:repr_compositions}, i.e.\ card configurations in the solitaire will be represented by weak integer compositions and the piles are ordered with respect to creation time, i.e.\ if $\alpha\in\mathcal{W}(n)$ is the current configuration in the solitaire, then $\alpha_1$ was the last formed pile, $\alpha_2$ the pile that was formed two moves ago, etc. With this representation, some piles may be empty, so one may imagine each pile being placed in a bowl and the bowls are lined up in a row on the table. In each move of the solitaire, the new (possibly empty) pile is put in a new bowl to the left of all old bowls. As mentioned in Section~\ref{sec:regimes}, we shall prove Theorem~\ref{thm:main_new} as a limit shape result for diagram-boundary functions of \emph{compositions}. Thus, throughout this section, each configuration of $n$ cards will be represented by an element of $\mathcal{W}(n)$. Also, in the following we may abbreviate $p=p_n$ and $q=q_n$ unless the dependency on $n$ is crucial.

Assume a configuration $\alpha = (\alpha_1,\alpha_2,\dotsc,\alpha_N,0,0,\dotsc)$ of $n$ cards with $N=N(\alpha)$ piles (so that $\sum_{i=1}^N \alpha_i=n$) in the solitaire $\mathscr{B}(n,p,q)$. The number of candidate cards in the next
move is $\kappa := \sum_{i=1}^N \lceil q\alpha_i\rceil$.
We denote the rounding effect in pile $1\le i\le N$ by
$R_i := \lceil q\alpha_i\rceil-q\alpha_i$ and the total rounding effect by
$R := \kappa-qn = \sum_{i=1}^N R_i$.

Clearly, $R<N$ (since $R_i<1$ for any $i$), i.e.\ the total rounding effect is bounded above by the number of piles. The first thing we will do is to make sure that after a sufficient number $D$ of moves from the initial configuration $\alpha^{(0)}$, 
the number of piles $N(\alpha^{(D)})$ in the resulting configuration $\alpha^{(D)}$ is much smaller than $qn$ a.a.s.\ (so that the number of candidate cards $\kappa$ is approximately $qn$ and thus the total rounding effect $R$ is negligible). 
In Lemma~\ref{lem:D} we show that it is possible to choose such a $D$, namely $D=c\frac{\log n}{pq}$ for any $c\ge 14$.

We also need to make sure that the number of piles stays $o(qn)$ for sufficiently many additional moves $M$, long enough
to establish the convergence of the overall shape. Lemma~\ref{lem:D} will also guarantee that $M=\lceil n^2/p \rceil$ suffices for this purpose.

Thus, in the following we shall use
\begin{equation}
\label{eq:DM}
D = \left\lceil 14\frac{\log n}{pq} \right\rceil \quad \text{and} \quad M = \left\lceil \frac{n^2}{p} \right\rceil.
\end{equation}	

If the number of piles stays $o(qn)$ during $M$ moves so that the number of candidate cards stays approximately $qn$, the newly formed pile in each of these moves will have expected size $pqn$. Our proof technique involves studying the evolution of such a pile (which will follow an exponential decay in size). Therefore we need to additionally make sure that no old piles (which could potentially be much larger than $pqn$) remain after these $M$ moves. Lemma~\ref{lem:M} shows that, in fact, after $M$ moves all piles in the starting configuration have disappeared a.a.s.


\begin{lemma}
	\label{lem:M}
	Let $M$ be given by \eqref{eq:DM}. From any initial configuration $\alpha\in\mathcal{W}(n)$, after $M$ moves in the solitaire $\mathscr{B}(n,p_n,q_n)$,	all piles in $\alpha$ have been consumed a.a.s.
\end{lemma}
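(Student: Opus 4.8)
The plan is to track how quickly an old pile can be consumed in a single move and to argue that, regardless of how large it starts, $M=\lceil n^2/p\rceil$ moves suffice to wipe out every pile present in the initial configuration $\alpha$. The key observation is that a pile of size $h$ contributes $\lceil q_n h\rceil\ge q_n h$ candidate cards, each picked independently with probability $p_n$, so the expected number of cards removed from that pile in one move is at least $p_n q_n h$. Thus as long as a pile survives its size is multiplied in expectation by a factor at most $(1-p_n q_n)$ per move. Over $M$ moves this drives the expected size below $1$, and one then wants to convert the expected decay into an a.a.s.\ statement that the pile is actually empty.

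First I would fix one old pile of size $h_0\le n$ and let $h^{(t)}$ denote its size after $t$ moves (conditioned on the evolution of the process, but the bounds below will hold pointwise in the randomness of the other piles, since a pile's own depletion depends only on its own candidate cards). The number removed in a move from a pile of current size $h\ge 1$ is a $\mathrm{Bin}(\lceil q_n h\rceil, p_n)$ variable with mean at least $p_n q_n h$. The cleanest route is a stopping-time/first-moment argument: I would bound the probability that the pile is still nonempty after $M$ moves. One concrete approach is to note that while $h\ge 1$, there is always at least one candidate card, so in each move the pile loses at least one card with probability at least $p_n$ (the event that some fixed candidate card is picked). Hence the number of moves needed to remove all $h_0\le n$ cards is stochastically dominated by a sum of $n$ independent $\mathrm{Geom}(p_n)$ waiting times; equivalently, the number of successful removals in $M$ moves dominates a $\mathrm{Bin}(M,p_n)$ count (this needs a careful coupling, since the ``success'' events across moves are independent once we condition only on the pile being nonempty). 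With $\mu:=Mp_n\ge n^2$, the Chernoff bound gives
\[
P\bigl(\mathrm{Bin}(M,p_n)<n\bigr)\le 2\exp\!\left(-\frac{(\mu-n)^2}{3\mu}\right)\le 2\exp\!\left(-\tfrac{1}{4}n^2\right)
\]
for large $n$, so a single pile survives with probability at most $e^{-\Omega(n^2)}$.

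Finally I would apply a union bound over all piles of $\alpha$. Since $\alpha\in\mathcal{W}(n)$ has at most $n$ nonempty piles, the probability that \emph{any} old pile survives $M$ moves is at most $n\cdot 2\exp(-\tfrac14 n^2)=o(1)$, giving the a.a.s.\ conclusion uniformly over all initial configurations. The main obstacle is the coupling step that justifies dominating the total depletion by a clean $\mathrm{Bin}(M,p_n)$ (or a sum of geometrics): the number of candidate cards and the removal probabilities for a given pile depend on its current size, which itself is random and decreasing, so one must argue that the ``at least one card removed with probability $\ge p_n$ per move while nonempty'' event can be extracted as an independent-across-moves lower bound. I would handle this by revealing, in each move, the outcome for a single distinguished candidate card in the pile (which exists whenever the pile is nonempty) and noting these indicators are independent $\mathrm{Bernoulli}(p_n)$ across moves; the actual depletion is at least as fast, so the survival bound only improves. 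Getting this domination stated precisely, rather than the expected-decay heuristic, is where the real care is needed.
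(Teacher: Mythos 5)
Your proposal is correct and follows essentially the same route as the paper: both arguments dominate the survival of a single old pile by the event $\{\mathrm{Bin}(M,p_n)<n\}$ (using that a nonempty pile always has at least one candidate card, hence loses a card with probability at least $p_n$ per move), apply the Chernoff bound to get an $e^{-\Omega(n^2)}$ failure probability, and finish with a union bound over the at most $n$ piles of $\alpha$. Your extra care in justifying the coupling via a distinguished candidate card per move is a welcome elaboration of the domination step the paper states without proof.
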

\begin{proof}
	Consider a pile of size $n$. The size of this pile after $M$ moves is statistically dominated by $\max(n-X,0)$ where $X\sim\Bin(M,p_n)$ whose expected value is $E(X)=Mp_n=\lceil n^2/p_n \rceil p_n > n$. Therefore, the probability that the pile remains after $M$ moves is ${P(X<n)}$ with the bound
	\begin{align*}
	P(X<n) &\le P(|X-Mp_n|>|Mp_n-n|) \le 2\exp\left( -\frac{(Mp_n-n)^2}{3Mp_n} \right) \\
	&\le 2\exp\left(-\frac{n^2}{3}(1+o(1))\right),
	\end{align*}
	where we used the Chernoff bound \eqref{eq:chernoff}.
	Thus, since any given pile in $\alpha$ has size $\le n$, and the number of piles (of any size in any configuration) is $\le n$, the probability that \emph{all} piles in $\alpha$
	have been consumed after $M$ moves is at least
	\[
	1-2n \exp\left(-\frac{n^2}{3}(1+o(1))\right) \to 1
	\]
	which concludes the proof.
\end{proof}

\begin{lemma}
\label{lem:D}
Let $n,p_n,q_n$ and an initial configuration $\alpha^{(0)}$ be given in the solitaire $\mathscr{B}(n,p_n,q_n)$. Then
\[
\frac{1}{q_n n} \max \left\{ N(\alpha^{(D+1)}), \dotsc, N(\alpha^{(D+M)}) \right\} \to 0 \text{ in probability},
\]
where $D$ and $M$ are given by \eqref{eq:DM}.
%
\end{lemma}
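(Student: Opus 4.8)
The plan is to show that, with probability tending to $1$, \emph{every} pile occurring in the process is completely consumed within $D$ moves of its creation, and then to convert this into a uniform bound on the number of nonempty piles. The key observation is a bookkeeping one: in the creation-time ordering, a pile created at time $s\ge 1$ occupies index $t-s+1$ in $\alpha^{(t)}$, while the piles of the initial configuration sit at indices exceeding $t$. Consequently, if at a given time $t\ge D+1$ no pile created at time $s\le t-D$ has survived, then every nonempty part of $\alpha^{(t)}$ was created at some time $s\ge t-D+1$ and hence has index at most $D$; in particular no initial pile survives, and $N(\alpha^{(t)})\le D$.

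First I would analyse a single pile in isolation. Writing $h_0,h_1,h_2,\dots$ for its size after successive moves, a pile of current size $h$ exposes exactly $\lceil q h\rceil$ candidates, each removed independently with probability $p$, so $h_{k+1}=h_k-\Bin(\lceil q h_k\rceil,p)$ and, since $\lceil q h_k\rceil\ge q h_k$,
\[
E(h_{k+1}\mid h_k)=h_k-p\lceil q h_k\rceil\le h_k(1-pq).
\]
As any pile has size $h_0\le n$, taking expectations and iterating gives $E(h_D)\le n(1-pq)^D\le n\,e^{-pqD}$. The choice $D=\lceil 14\log n/(pq)\rceil$ from \eqref{eq:DM} makes $pqD\ge 14\log n$, so $E(h_D)\le n^{-13}$; since the size is a nonnegative integer, Markov's inequality gives that a fixed pile survives more than $D$ moves with probability at most $n^{-13}$.

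Next I would apply a union bound. The piles that can inflate $N(\alpha^{(t)})$ for some $t\in\{D+1,\dots,D+M\}$ are among the (at most $n$) initial piles together with the piles created during moves $1,\dots,D+M$, a total of at most $n+D+M$ piles; by the previous step each survives more than $D$ moves with probability at most $n^{-13}$. The hypothesis forces $pn\to\infty$ (as $q\le1$ gives $pn\ge pq^2n\to\infty$), hence $1/p\le n$ eventually; combined with $M=\lceil n^2/p\rceil$ this yields $(n+D+M)n^{-13}=O(n^{-10})\to0$. On the complementary good event $G$ the indexing argument of the first paragraph gives $N(\alpha^{(t)})\le D$ simultaneously for all $t\in\{D+1,\dots,D+M\}$, whence
\[
\frac{1}{q n}\max\left\{N(\alpha^{(D+1)}),\dots,N(\alpha^{(D+M)})\right\}\le\frac{D}{q n}\le\frac{14\log n}{p q^2 n}+\frac{1}{q n}.
\]
Both terms on the right are deterministic and tend to $0$: the first by the regime assumption \eqref{eq:npq2/logn}, the second because $q n\ge q^2 n\ge p q^2 n\to\infty$. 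Since a deterministic bound holds on an event of probability tending to $1$, convergence in probability follows.

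The delicate point is the competition between these two scales. To kill the $\sim n^2/p$ piles created over the very long window $M$ by a union bound, the per-pile survival probability must decay faster than $n^2/p$, which is what the constant $14$ in $D$ buys (it produces the exponent $-13$); yet $D$ must simultaneously stay $o(qn)$ so that $D/(qn)\to0$ after rescaling. These demands pull against each other, and it is precisely the assumption $p q^2 n/\log n\to\infty$ that accommodates both, since it makes $D/(qn)$ comparable to $\log n/(p q^2 n)\to0$ while leaving a large surplus in the union-bound exponent.
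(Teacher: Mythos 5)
Your proof is correct, but it takes a genuinely different route from the paper's. The paper splits the piles of $\alpha^{(t-D)}$ by size: piles of size at most $q^{-1}\log n$ are shown to vanish within $D$ moves via a Chernoff bound on $\Bin(D,p)$ (using only that a nonempty pile loses at least one card with probability $p$ per move), piles larger than $q^{-1}\log n$ are merely counted (there are at most $qn/\log n$ of them), and the $D$ newly created piles are added on top, giving $N(\alpha^{(t)})\le qn/\log n + D = o(qn)$; a union bound over the $M$ time steps then finishes. You instead exploit the proportional picking rule in full: since a pile of size $h$ exposes $\lceil qh\rceil\ge qh$ candidates, the first-moment recursion $E(h_{k+1}\mid h_k)\le(1-pq)h_k$ gives exponential decay of the expected size of \emph{every} pile, so $E(h_D)\le n(1-pq)^D\le n^{-13}$ and Markov's inequality kills each pile within $D$ moves of its creation; a union bound over all piles ever created (rather than over time steps) plus the creation-time indexing then yields the sharper deterministic bound $N(\alpha^{(t)})\le D$ on the good event. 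Your approach buys a stronger conclusion and avoids both the small/large dichotomy and the Chernoff bound, at the cost of nothing I can see; the paper's approach uses less about the dynamics (only ``at least one candidate per nonempty pile''), which is why it must treat large piles by counting instead of by decay. Both arguments rely on the same choice of the constant $14$ in $D$ to beat the $\sim n^2/p$ terms in the respective union bounds, and both reduce the final estimate to $D/(qn)\to 0$, i.e.\ to the hypothesis \eqref{eq:npq2/logn}.
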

\begin{proof}
Let us abbreviate $p=p_n$ and $q=q_n$.
We will first prove that all piles of size at most $q^{-1}\log n$ disappear with high probability after $D$ moves, making sure that there are not many small piles in $\alpha^{(D)}$.
Consider a pile of size at most $q^{-1}\log n$ in $\alpha^{(0)}$.
Note that every nonempty pile decreases by at least $1$ with probability at least $p$ in each move.
Therefore, after $D$ moves the number of picked cards from this pile statistically dominates $X\sim\Bin(D,p)$ with expected value $Dp=14q^{-1}\log n$.
Using the Chernoff bound \eqref{eq:chernoff}, the probability that this pile remains after $D$ moves is at most
\begin{align*}
P_1 &:= P\left( X<\frac{\log n}{q} \right) \le P\left( |X-Dp|> \left| Dp-\frac{\log n}{q} \right| \right) \\
& \le 2\exp\left(-\frac{\Bigl(Dp-\frac{\log n}{q}\Bigr)^2}{3Dp}\right)
= 2\exp\left( -\frac{( \frac{14\log n}{q} - \frac{\log n}{q} )^2}{3\cdot 14\frac{\log n}{q}} \right)
= 2n^{-\frac{13^2}{42}\frac{1}{q}} < 2n^{-4}.
\end{align*}
Since there can be at most $n$ piles of size at most $q^{-1}\log n$, the probability that not \emph{all} piles of size at most $q^{-1}\log n$ have disappeared after $D$ moves is bounded by
\[
P_2:=nP_1=2n^{-3}.
\]

Let us now turn our attention to \emph{the number of} piles after these $D$ moves.
By the above, all piles smaller than $q^{-1}\log n$ have disappeared with high probability. Clearly, the number of piles \emph{larger than} $q^{-1}\log n$ can never be more than $\frac{n}{q^{-1}\log n}=\frac{qn}{\log n}$. Also, during the process of these $D$ moves, at most $D$ new piles have been formed. (Exactly $D$ piles have been formed but some may have disappeared in the process.) Thus, for the total number of piles $N(\alpha^{(D)})$ in the configuration $\alpha^{(D)}$ after $D$ moves, with probability at least $1-2n^{-3}$, we have
\[
N(\alpha^{(D)}) \le \frac{qn}{\log n}+D=\frac{qn}{\log n}+14\frac{\log n}{pq}
= qn\left( \frac{1}{\log n}+14\frac{\log n}{p q^2 n} \right) = o(qn),
\]
where we used the assumption \eqref{eq:npq2/logn} in the last step. 
It follows that, for any $\varepsilon>0$,
\[
\frac{1}{qn} \max \left\{ N(\alpha^{(D+1)}), \dotsc, N(\alpha^{(D+M)}) \right\} < \varepsilon
\]
with probability at least $1-2n^{-3}M \ge 1-\frac{2}{pn} \to 1$ since $pn\to \infty$. (That $pn\to 0$ is also a consequence of the assumption \eqref{eq:npq2/logn}.)
%
%
\end{proof}

Lemma~\ref{lem:D} asserts that the number of piles remains to be $o(qn)$ during the $M$ moves from $\alpha^{(D)}$ to $\alpha^{(D+M)}$, hence the number of candidate cards remains to be $qn$ (a.a.s.) during the same moves. Therefore the number of picked cards (which equals the size of the newly formed pile), remains of \emph{expected} size $pqn$.
In Lemma~\ref{lem:variance} we prove that the \emph{actual} number of picked cards in each of these $M$ moves does not deviate (relatively) from $pqn$.

%
%
%
\begin{lemma}
	\label{lem:variance}
	Let $n,p_n,q_n$ and an initial configuration $\alpha^{(0)}$ be given in the solitaire $\mathscr{B}(n,p_n,q_n)$.
	Let $D$ and $M$ be given by \eqref{eq:DM}. Then
	\[
	\max_{k\in[D+1,D+M]} \frac{|\alpha^{(k)}_1 - p_n q_n n|}{p_n q_n n} \to 0 \text{ in probability}
	\]	
	as $n\to\infty$.
\end{lemma}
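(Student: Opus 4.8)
The plan is to exploit that $\alpha^{(k)}_1$, the size of the pile created in move $k$, is precisely the number of candidate cards picked in that move, hence a binomial variable whose parameters are governed by the previous configuration. Let $\mathcal{F}_k$ denote the $\sigma$-algebra generated by $\alpha^{(0)},\dotsc,\alpha^{(k)}$. If $\alpha^{(k-1)}$ has $\kappa_k:=\sum_i\lceil q\alpha^{(k-1)}_i\rceil$ candidate cards, then, each candidate being picked independently with probability $p$, conditionally on $\mathcal{F}_{k-1}$ we have $\alpha^{(k)}_1\sim\Bin(\kappa_k,p)$ with conditional mean $\mu_k:=p\kappa_k$. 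Writing $\kappa_k=qn+R_k$ with $0\le R_k<N(\alpha^{(k-1)})$ (the total rounding effect never exceeds the number of piles), I obtain $|\mu_k-pqn|=pR_k<pN(\alpha^{(k-1)})$.

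Fix $\varepsilon>0$ (it suffices to treat small $\varepsilon$) and set $\delta=\varepsilon/2$. By Lemma~\ref{lem:D} there is an event $G$, of probability tending to $1$, on which $N(\alpha^{(k-1)})<\delta qn$ for every $k\in[D+1,D+M]$ (the endpoint $\alpha^{(D)}$ is covered by the same estimate appearing in the proof of Lemma~\ref{lem:D}). On $G$ we have $\kappa_k\in[qn,(1+\delta)qn)$ and $|\mu_k-pqn|<\delta pqn=(\varepsilon/2)pqn$, so by the triangle inequality the bad event $B_k:=\{|\alpha^{(k)}_1-pqn|\ge\varepsilon pqn\}$ forces $|\alpha^{(k)}_1-\mu_k|\ge(\varepsilon/2)pqn$. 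Since $G_k:=\{N(\alpha^{(k-1)})<\delta qn\}$ is $\mathcal{F}_{k-1}$-measurable and contains $G$, I condition on $\mathcal{F}_{k-1}$ and apply the Chernoff bound \eqref{eq:chernoff} with $\gamma=(\varepsilon/2)pqn<\mu_k$; using $\mu_k\le 2pqn$ this gives, on $G_k$,
\[
P\bigl(|\alpha^{(k)}_1-\mu_k|\ge(\varepsilon/2)pqn \,\big|\, \mathcal{F}_{k-1}\bigr)\le 2\exp\!\left(-\tfrac{\varepsilon^2}{24}\,pqn\right).
\]

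Finally I union-bound over the window. Since $B_k\cap G\subseteq B_k\cap G_k$ and $G_k\in\mathcal{F}_{k-1}$,
\[
P\Bigl(\textstyle\bigcup_{k=D+1}^{D+M} B_k\Bigr)\le P(G^c)+\sum_{k=D+1}^{D+M}P(B_k\cap G_k)\le P(G^c)+2M\exp\!\left(-\tfrac{\varepsilon^2}{24}\,pqn\right).
\]
The first term vanishes by Lemma~\ref{lem:D}. The crux—and the step I expect to be the main obstacle—is showing the second term vanishes, i.e.\ that $\tfrac{\varepsilon^2}{24}pqn-\log M\to\infty$, since the window length $M\approx n^2/p$ is enormous. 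Here $\log M=\log\lceil n^2/p\rceil=2\log n+\log(1/p)+O(1)$, and I would extract two consequences of the hypothesis $p q^2 n/\log n\to\infty$: first, as $q\le 1$, $pqn\ge pq^2 n\gg\log n$; second, again as $q\le 1$, $pn\ge pq^2n\gg\log n$, whence $\log(1/p)<\log n$ and thus $\log M<3\log n=o(pqn)$. Consequently $\tfrac{\varepsilon^2}{24}pqn$ dominates $\log M$, the union bound tends to $0$, and since $\varepsilon>0$ was arbitrary the claimed convergence in probability follows.
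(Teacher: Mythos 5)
Your proposal is correct and follows essentially the same route as the paper: identify $\alpha^{(k)}_1$ as (conditionally) binomial with $\kappa_k=q_nn(1+o(1))$ candidates via Lemma~\ref{lem:D}, apply the Chernoff bound, and union-bound over the $M$ moves using that $\log M=O(\log n)=o(p_nq_nn)$. Your treatment is somewhat more explicit about the conditioning on $\mathcal{F}_{k-1}$ and the $\alpha^{(D)}$ endpoint, but the argument is the same.
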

\begin{proof}
Let us abbreviate $p=p_n$ and $q=q_n$.
Let $\varepsilon>0$ and let $\kappa$ be the number of candidate cards in $\alpha^{(k-1)}$ for some $k=D+1,\dotsc,D+M$.
Recall that the total rounding effect in computing the number of candidate cards is bounded above by the number of piles.
It therefore follows from Lemma~\ref{lem:D} that $\kappa=nq(1+o(1))$.
The new pile size is $\alpha_1^{(k)} \sim \Bin(\kappa,p)$. Then, using the triangle inequality and the Chernoff bound \eqref{eq:chernoff} we have
\begin{align*}
P_3 := P(|\alpha^{(k)}_1 -pqn| > \varepsilon pqn) &\le P(|\alpha^{(k)}_1-\kappa p| > \varepsilon pqn-|\kappa p-pqn|) \\
&< 2\exp\left( -\frac{(\varepsilon pqn-|\kappa p-pqn|)^2}{3\kappa p} \right) \\
&= 2\exp\left( -\frac{\varepsilon^2}{3} pqn(1+o(1)) \right) \\
&= o(1/M)
\end{align*}
where the last equality is derived as follows.
By \eqref{eq:npq2/logn}, $\log n = o(pqn)$ and hence $\log n^a = o(pqn)$ for any $a\ge 1$. 
Since $pqn\to\infty$, this means that $\exp(-pqn)$ tends to zero faster than $\exp(-\log n^a)$, i.e.,
$\exp(-pqn) = o(1/n^a)$. Since $np\to\infty$, we therefore also have $\exp(-pqn) = o(p/n^a) = o(1/M)$.
The next to the last equality follows from the fact that $\varepsilon pqn$ dominates over $|pqn-\kappa p|$
(since $|pqn-\kappa p| = |pqn-nq(1+o(1))p| = pqn\cdot o(1)$).

Therefore, the probability that $|\alpha_1^{(k)} - pqn| > \varepsilon pqn$ for \emph{any} $k$ during the entire process of $M$ moves is bounded by $MP_3 = M\cdot o(1/M) = o(1)$.
\end{proof}
While playing the solitaire, there is a
possibility that at some point there will be too many piles, and thereby 
the number of candidate cards will be bigger than $qn$ (and thus the size of the newly formed pile will be bigger than $pqn$). Lemmas~\ref{lem:M} and \ref{lem:D} ensures that this never happens a.a.s.\ during
the entire process of $M$ moves from $\alpha^{(D)}$ to $\alpha^{(D+M)}$.

There is also a risk that, even if there are suitably many ($qn$) candidate cards, the number of \emph{picked} cards among them will deviate from $pqn$ due to random fluctuations (and thereby the size of the newly formed pile will deviate from $pqn$). Lemma~\ref{lem:variance} ensures that this never happens a.a.s.\ during the same period of $M$ moves.

Therefore, after $m:=D+M$ moves we have the following a.a.s.
\begin{itemize}
\item all current piles have been formed during the last $M$ moves, and
\item all current piles had size $pqn$ when they were formed.
\end{itemize}
At this point, i.e.\ in the configuration $\Gamma:=\alpha^{(m)}$, the leftmost pile (of size $\Gamma_1$) was formed one move ago, the second pile from the left (of size $\Gamma_2$) was formed two moves ago, and so on. We shall prove that the size $\Gamma_k$ of the pile that was formed $k$ moves ago for any $k=1,2,\dotsc,m$ is
$\Gamma_k = \Gamma_1(1-pq)^k = pqn(1-pq)^k$ a.a.s., i.e.\ the size decreases exponentially with $k$ with decay factor $1-pq$.

We will now consider the evolution of a given pile of size $A_1$ during $r\ge 1$ steps in the $p$-random $q$-proportion Bulgarian solitaire in the following way. We will need to keep track of each individual card in this pile. To this end, we
label the cards $1,2,\dotsc,A_1$ starting from the top, and each card will keep their label throughout the process.
Let $X_{i,k} \in \{0,1\}$ where $i=1,\dotsc,A_1$ and $k=1,\dotsc,r$ be independent Bernoulli random variables with $P(X_{i,k}=1) = p$.

Consider the following process. 
Let $A_{k+1}$ be the number of cards after $k$ moves.
In each move $k=1,2,\dots,r$, we remove the card with label $i$ if $X_{i,k}=1$ and this card belongs to the candidate cards, i.e., the $\lceil qA_k \rceil$ top-most remaining cards. 
We will call this process a $q$-\emph{process}. This process describes the evolution of a pile of size $A_1$ in the $p$-random $q$-proportion Bulgarian solitaire. 

Using the same Bernoulli variables, for any real number $0 \le s \le 1$, we define an $s$-\emph{threshold process} in the following way. 
In each move $k=1,2\dots,r$, we remove the card with label $i$ if $X_{i,k}=1$ and $i\le sA_1$. In this process, we let $A^{[s]}_{k+1}$ denote the number of remaining cards after $k$ moves. 
When it is relevant to indicate the initial pile size, an $s$-threshold process is called
an $(s,A_1)$-threshold process and the number of remaining cards after $k$ moves is denoted by $A^{[s,A_1]}_{k+1}$.

In the proof of Theorem~\ref{thm:main_new}, we will use two different $s$-threshold processes (for two different values of $s$) to over- and underestimate the sizes of $r+1$ consecutive piles in $\Gamma$ (corresponding to the $r$ steps in an $s$-threshold process). Both these processes will have the same desired limit shape and thus the limit shape of our solitaire will follow by the squeeze theorem. We first need a combinatorial lemma giving sufficient conditions for overestimation and for underestimation.
\begin{lemma}
\label{lem:combinatorial}
	(i) If $\lceil sA_1 \rceil \le \lceil qA_1 \rceil$, then $A^{[s]}_k \ge A_k$ for $k=1,\dotsc,r+1$.
	
	(ii) If $(1-q)A^{[s]}_{r+1} \ge A_1-\lceil sA_1 \rceil$, then $A^{[s]}_k \le A_k$ for $k=1,\dotsc,r+1$.
\end{lemma}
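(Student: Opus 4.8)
The plan is to run both the $q$-process and the $s$-threshold process on the \emph{same} Bernoulli variables $X_{i,k}$ and compare them pathwise, move by move. Let $\mathcal S_k$ (resp.\ $\mathcal S^{[s]}_k$) denote the set of labels still present before move $k$ in the $q$-process (resp.\ the threshold process), so that $A_k=|\mathcal S_k|$ and $A^{[s]}_k=|\mathcal S^{[s]}_k|$. The only structural fact I need is that in both processes the candidate set is a \emph{label-prefix} of the surviving cards: the $q$-process selects the $\lceil qA_k\rceil$ surviving cards of smallest label, while the threshold process selects exactly those surviving cards of small label and in particular never touches the $h:=A_1-\lceil sA_1\rceil$ cards of largest label, which I will call \emph{high} cards; the remaining cards I call \emph{low}. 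Each part is then an induction on $k$ carrying a suitable invariant.

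For part (i) I would carry the invariant that (a) the set of surviving low cards is \emph{identical} in the two processes, and (b) this common number $|\mathcal L_k|$ satisfies $|\mathcal L_k|\le\lceil qA_k\rceil$. The base case is exactly the hypothesis $\lceil sA_1\rceil\le\lceil qA_1\rceil$. For the inductive step, (b) guarantees that every surviving low card lies in the $q$-candidate window: a low card $i$ has fewer than $|\mathcal L_k|\le\lceil qA_k\rceil$ surviving cards below it (all such cards being low, hence counted in $\mathcal L_k$), so $i$ is a $q$-candidate. Consequently the two processes remove exactly the same low cards during move $k$ — namely those surviving low cards with $X_{i,k}=1$ — which preserves (a). To re-establish (b) I would write $A_{k+1}=A_k-d-e$, where $d$ is the number of low cards removed and $e$ the number of high cards removed by the $q$-process, note that $e\le\lceil qA_k\rceil-|\mathcal L_k|$ (there are only that many high candidates), and use $\lceil qA_{k+1}\rceil\ge\lceil qA_k\rceil-(d+e)$ to get $\lceil qA_{k+1}\rceil-|\mathcal L_{k+1}|\ge(\lceil qA_k\rceil-|\mathcal L_k|)-e\ge0$. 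Since (a) forces $\mathcal S_k\subseteq\mathcal S^{[s]}_k$ (equal on low cards, and every high card survives in the threshold process), taking cardinalities gives $A_k\le A^{[s]}_k$.

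For part (ii) the comparison flips: I would carry the invariant that $\mathcal S^{[s]}_k\subseteq\mathcal S_k$ and that the $q$-process has removed \emph{no} high card up to move $k$, so that $A_k=\ell_k+h$ with $\ell_k:=|\{\text{low cards in }\mathcal S_k\}|$. The engine is the reverse count bound $\lceil qA_k\rceil\le\ell_k$, which says the $q$-window sits entirely inside the low cards and hence cannot reach a high card. This is where the hypothesis enters: from $\mathcal S^{[s]}_k\subseteq\mathcal S_k$ one gets $\ell_k\ge\ell^{[s]}_k$, the threshold low-count $\ell^{[s]}_k$ is nonincreasing in $k$, and $\ell^{[s]}_{r+1}=A^{[s]}_{r+1}-h$, so the assumption $(1-q)A^{[s]}_{r+1}\ge A_1-\lceil sA_1\rceil=h$ propagates backward to give $(1-q)\ell_k\ge qh$, equivalently $q(\ell_k+h)\le\ell_k$ and hence $\lceil qA_k\rceil\le\ell_k$ for every $k\le r+1$. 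Granting this, no high card is ever a $q$-candidate, so ``no high card removed'' is preserved; and since all surviving low cards with $X_{i,k}=1$ are removed by the threshold rule while the $q$-process removes only a subset of the low cards, the containment $\mathcal S^{[s]}_{k+1}\subseteq\mathcal S_{k+1}$ is preserved. Taking cardinalities yields $A^{[s]}_k\le A_k$.

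The main obstacle in both parts is establishing that the self-referential count bound — $|\mathcal L_k|\le\lceil qA_k\rceil$ in (i), and $\lceil qA_k\rceil\le\ell_k$ in (ii) — is preserved by the dynamics, since it is exactly this bound that pins the $q$-candidate window relative to the low/high split and thereby couples the two processes. In part (i) this reduces to a short ceiling estimate controlling how many high cards the $q$-process may shed in one move; in part (ii) it is subtler, because the bound for \emph{all} $k$ must be extracted from a hypothesis phrased in terms of the \emph{final} size $A^{[s]}_{r+1}$, which forces the backward argument using monotonicity of the threshold low-count. The remaining floor/ceiling bookkeeping around $\lceil sA_1\rceil$ is routine once the invariants are in place.
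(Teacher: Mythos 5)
Your proposal is, at its core, the paper's own argument: both couple the two processes through the shared variables $X_{i,k}$ and reduce everything to locating the $q$-candidate window relative to the threshold cut. For part~(i) the paper argues more directly---a card removed by the threshold rule has label at most $\lceil sA_1\rceil\le\lceil qA_1\rceil$, hence is a candidate in the initial $q$-pile and, since the non-candidate suffix of size $A_k-\lceil qA_k\rceil$ only shrinks, remains a candidate as long as it survives---which is exactly the content of your invariant (b); your bookkeeping with $d$ and $e$ is a correct, somewhat more laborious route to the same fact. For part~(ii), your backward propagation of the hypothesis through the monotone quantity $\ell^{[s]}_k$ is the same mechanism as the paper's induction on $r$ via $A^{[s]}_r\ge A^{[s]}_{r+1}$.

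One concrete point does not survive scrutiny. The threshold rule removes card $i$ only when $i\le sA_1$, i.e.\ $i\le\lfloor sA_1\rfloor$, so the untouched suffix has $A_1-\lfloor sA_1\rfloor$ cards, not $h=A_1-\lceil sA_1\rceil$. When $sA_1\notin\mathbb{Z}$ the card with label $\lceil sA_1\rceil$ is ``low'' in your bookkeeping yet untouchable; consequently invariant (a) of part~(i) (``identical'') is false as stated and must be weakened to a containment of the $q$-surviving low cards in the threshold-surviving low cards (which still gives $A_k\le A^{[s]}_k$), and in part~(ii) the claim that the threshold rule removes every surviving low card with $X_{i,k}=1$ fails for that one card. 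This is not pedantry: with $A_1=10$, $q=1/2$, $s=0.55$, $r=2$ and $X_{1,1}=X_{2,1}=X_{6,2}=1$ (all other $X_{i,k}=0$), the hypothesis of (ii) holds, since $\tfrac12 A^{[s]}_3=4\ge 10-\lceil 5.5\rceil=4$, yet $A_3=7<8=A^{[s]}_3$, because card $6$ becomes a $q$-candidate at step $2$ but can never be removed by the threshold rule. You should know that the paper's proof glosses over exactly the same boundary card (its conclusion that the candidates at step $r$ are removable by the threshold rule only follows with $\lfloor sA_1\rfloor$ in place of $\lceil sA_1\rceil$), and that the issue is immaterial where the lemma is applied, since there $s=q(1+2p_nr_n)$ and the inequality $(1-q)\check{U}_{r+1}>(1-s)\check{U}_1$ holds strictly with room to spare; but to make part~(ii) literally true one should either replace $\lceil sA_1\rceil$ by $\lfloor sA_1\rfloor$ in the hypothesis or define the threshold rule by $i\le\lceil sA_1\rceil$.
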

\begin{proof}
	(i) A card that is removed at some step $\ell$ during the $s$-threshold process must have label $i\le\lceil sA_1\rceil$,
	so in the $q$-process it belongs to the $\lceil qA_1\rceil$ candidate cards in the initial pile and hence it belongs
	to the candidate cards also at step $\ell$ and will be removed. Thus, every card removed in the $s$-threshold process
	is removed in the $q$-process too, and it follows that $A_k^{[s]}\ge A_k$ for $k=1,\dotsc,r+1$.
	
	(ii) We show by induction over $r$ that, after $r$ steps, the remaining cards in the $s$-threshold process
	is a subset of the remaining cards in the $q$-process. Suppose $(1-q) A_{r+1}^{[s]}\ge A_1-\lceil sA_1\rceil$. Since
	$A_r^{[s]}\ge A_{r+1}^{[s]}$ we have $(1-q) A_r^{[s]}\ge A_1-\lceil sA_1\rceil$ which by the induction
	hypothesis implies that $A_k^{[s]}\le A_k$ for $1\le k\le r$. It follows that
	$(1-q) A_r\ge A_1-\lceil sA_1\rceil$ which in turn implies that
	$A_r-(A_1-\lceil sA_1\rceil) \ge \lceil qA_r\rceil$. This latter inequality means that the $\lceil qA_r\rceil$
	topmost cards before step $r$ in the $q$-process all have labels larger than $A_1-\lceil sA_1\rceil$. Thus,
	if a card is removed in step $r$ in the $q$-process it is also removed in step $r$ or earlier in the $s$-threshold process.
	This concludes the induction step. The base step $r=0$ is trivial.
%
\end{proof}

Recall that we are considering the configuration $\Gamma = \alpha^{(m)}$ after $m=M+D$ moves in the solitaire from the initial configuration $\alpha^{(0)}$. 
We will compare the sizes of $r+1$ consecutive piles in $\Gamma$ to the $r+1$ pile sizes in an $s$-threshold process. In order to  make the comparison for \emph{all} piles in $\Gamma$, this will be done for $r+1$ consecutive piles  (which we will call an $r$-\emph{chunk}) at a time. In each $r$-chunk the initial pile size is the corresponding pile size in the solitaire. 
In other words,
$\Gamma_1, \Gamma_2, \dotsc, \Gamma_{r+1}$ will be compared to 
the pile sizes in an $(s,\Gamma_1)$-threshold process (with initial pile size $\Gamma_1$);
and $\Gamma_{r+2}, \Gamma_{r+3}, \dotsc, \Gamma_{2(r+1)}$ will be compared to 
the pile sizes in an $(s,\Gamma_{r+2})$-threshold process (with initial pile size $\Gamma_{r+2}$), and so on. Let us call the resulting union of $s$-threshold processes an $(r,s)$-\emph{union} process. Thus, if we denote the pile sizes in this $(r,s)$-union process by ${U}_1,{U}_2,\dotsc$, we have
\begin{align*}
& U_1 = \Gamma_1 = A_1^{[s,\Gamma_1]}, && U_2 = A_2^{[s,\Gamma_1]}, && \dotsc, && U_{r+1} = A_{r+1}^{[s,\Gamma_1]}, \\
& U_{r+2} = \Gamma_{r+2}=A_1^{[s,\Gamma_{r+2}]}, && U_{r+3} = A_2^{[s,\Gamma_{r+2}]}, && \dotsc, && U_{2(r+1)} = A_{r+1}^{[s,\Gamma_{r+2}]}, \dotsc.
\end{align*}

We intend to use the $(r,s)$-union process to estimate the pile sizes in $\Gamma$.
In an $s$-threshold process, starting with a pile of size $A_1$, the number of remaining cards $B$ above the level $A_1(1-s)$ after $r$ moves is binomially distributed: $B \sim \Bin(A_1 s,(1-p)^r)$.
See Figure~\ref{fig:exp}.
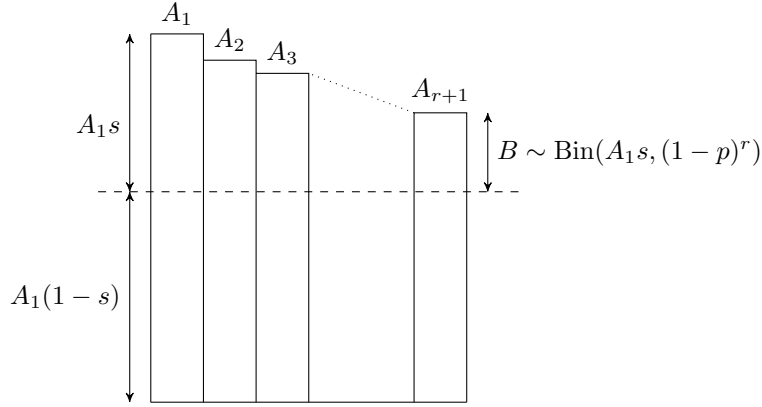
\begin{figure}[ht]
\centering
\begin{tikzpicture}[x=0.7cm,y=0.7cm,>=stealth']
\draw[-] (0,0) -- (6,0);
\draw[-] (0,0) -- (0,7);


\draw (1,0) -- (1,7) -- (0,7)
      (2,0) -- (2,6.5) -- (1,6.5)
      (3,0) -- (3,6.25) -- (2,6.25)
      (6,0) -- (6,5.5) -- (5,5.5) -- (5,0);
\draw[dotted] (5,5.5) -- (3,6.25);

\draw[dashed] (-1,4) -- (7,4);

\draw[<->] (-0.4,0) -- (-0.4,4);
\draw[<->] (-0.4,4) -- (-0.4,7);
\draw[<->] (6.4,4) -- (6.4,5.5);

\draw (-0.4,2) node[anchor=east]{$A_1(1-s)$};
\draw (-0.4,5.25) node[anchor=east]{$A_1s$};
\draw (6.4,4.75) node[anchor=west]{$B\sim\Bin(A_1s,(1-p)^r)$};

\draw (0.5,7) node[anchor=south]{$A_1$};
\draw (1.5,6.5) node[anchor=south]{$A_2$};
\draw (2.5,6.25) node[anchor=south]{$A_3$};
\draw (5.5,5.5) node[anchor=south]{$A_{r+1}$};

\end{tikzpicture}
\caption{The $r$ steps of an $(s,A_1)$-threshold process.\label{fig:exp}}
\end{figure}
Therefore we need to choose $r=r_n$ and $s=s_n$ in such a way that we have the following in each $s$-threshold process:
\begin{itemize}
	\item[I] The pile size $A_{k+1}$ is close to $A_1(1-pq)^k$ a.a.s.\ for all $k=1,\dotsc,r$, which we need to establish the wanted limit shape. 
	\item[II] At the same time $s$ must be close enough to $q$ to make the over- and underestimations tight enough.
\end{itemize}


To accomplish (I), clearly $r=r_n$ can at least not be chosen bigger than $1/p_n$, in fact we shall require $p_n r_n\to 0$ as $n\to\infty$, in order for the variance in the size of the last pile (after $r_n$ moves) in an $s$-threshold process to be small with high probability. However, we shall see that $p_n r_n$ may not tend to zero too fast. We will require
\begin{equation}
\label{eq:pr1}
(p_n r_n)^2\frac{p_n q_n^2 n}{\log{n}} = \frac{p_n^3q_n^2 n r_n^2}{\log{n}} \rightarrow\infty \text{ as }n\rightarrow\infty.
\end{equation}
(Recall from \eqref{eq:npq2/logn} that $p_n q_n^2 n/{\log{n}} \to\infty$.)
However, since $r_n$ is a positive integer for any $n$, if $p_n\not\to 0$ we cannot have 
$p_n r_n \to 0$, but will see that $r_n=1$ suffices in the case $p_n\not\to 0$. In other words, we will require
\begin{equation}
\label{eq:pr2}
p_n(r_n-1)\rightarrow 0 \text{ as }n\rightarrow\infty.
\end{equation}

To accomplish (II) we shall see that $s=q$ will suffice for the overestimation and $s=q(1+2pr)=q(1+o(1))$ for the underestimation. 

One way of choosing $r_n$ such that \eqref{eq:pr1} and \eqref{eq:pr2} are fulfilled is 
\begin{equation}
\label{eq:rn}
r_n = \left\lceil \rho_n^{-1/3} p_n^{-1} \right\rceil \text{ where }\rho_n = \frac{p_n q_n^2 n}{1+\log n}.
\end{equation}
This choice fulfills \eqref{eq:pr2} since
$(r_n-1)p_n < r_np_n \le \rho_n^{-1/3} \rightarrow 0$.
That \eqref{eq:pr1} is fulfilled is easily verified:
\[
(p_n r_n)^2\frac{p_n q_n^2 n}{\log{n}} > \rho_n^{-2/3} \frac{p_n q_n^2 n}{\log{n}} = 
\frac{(p_nq_n^2 n)^{1/3}}{(1+\log n)^{-2/3}\log n} >
\left( \frac{p_nq_n^2 n}{\log n} \right)^{1/3} \to \infty
\]
as $n\to\infty$ by \eqref{eq:npq2/logn}.

Our next lemma, Lemma~\ref{lem:rchunk}, will bound the probability $P'$ that an initial pile of size $I_n:=O(pqn)$ will, after $r_n$ moves in an $s_n$-threshold process, deviate from the expected size assuming exponential decay, when $s_n=q_n(1+o(1))$.

Since the number of piles is $\approx(p_n q_n)^{-1}$, the number of $r$-chunks is $\approx(p_n q_n r_n)^{-1}$. When using Lemma~\ref{lem:rchunk} we need the bound $P'$ to hold for \emph{each} chunk during \emph{all} $M$ moves (where $M$ is given by \eqref{eq:DM}), specifically $P'M/(p_nq_nr_n) \to 0$ as $n\to\infty$. The probability in Lemma~\ref{lem:rchunk} is therefore bounded by
$o(p_n q_n r_n/M) = o(p_n^2 q_nr_n/n^2)$. This is also why the pile size deviation $\varepsilon np_nq_n$
is scaled with the number of chunks, resulting in the deviation $(\varepsilon p_n q_n n)(p_n q_n r_n) =  \varepsilon p_n^2 q_n^2 n r_n$.

\begin{lemma}
\label{lem:rchunk}
	Let $(p_n)_n$ and $(q_n)_n$ be real sequences such that $0<p_n,q_n\le 1$ and
	$p_n q_n \rightarrow 0 \text{ as }n\rightarrow\infty$.
	For each $n$, let also
	$B_n \sim \Bin(F_n s_n,(1-p_n)^{r_n})$ where
	$(F_n)_n$ and $(s_n)_n$ are real sequences such that
	\begin{equation}
	\label{eq:thm_stoch_exp_As}
	F_n=O(p_n q_n n) \quad \text{and}\quad s_n=q_n(1+o(1)),
	\end{equation}
	and $F_n s_n$ is an integer for any $n$. Let also
	$(r_n)_n$ be the sequence of positive integers in \eqref{eq:rn}.
	Then, for all $\varepsilon>0$ we have
	\[
	P\left(\big| B_n + F_n(1-s) - F_n(1-p_nq_n)^{r_n} \big| > \varepsilon p_n^2 q_n^2 n r_n \right) = o(p_n^2 q_n r_n/n^2).
	\]
\end{lemma}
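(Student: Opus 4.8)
The plan is to split the quantity inside the probability into a random fluctuation plus a deterministic bias, show the bias is negligible on the target scale $\varepsilon p_n^2q_n^2nr_n$, and then control the fluctuation by the Chernoff bound \eqref{eq:chernoff}. Abbreviating $p=p_n$, $q=q_n$, $r=r_n$, $F=F_n$, $s=s_n$ and writing $\mu:=E(B_n)=Fs(1-p)^r$, the one-line rearrangement
\[
  B_n+F(1-s)-F(1-pq)^r=(B_n-\mu)+\Delta,\qquad \Delta:=F\bigl[v-su\bigr],
\]
with $u:=1-(1-p)^r$ and $v:=1-(1-pq)^r$, separates the problem into a deterministic term $\Delta$ and the centred binomial deviation $B_n-\mu$.

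First I would dispose of $\Delta$. Taylor-expanding gives $u=pr\bigl(1+o(1)\bigr)$ and $v=pqr\bigl(1+o(1)\bigr)$, where the $o(1)$ corrections are of order $(r-1)p$ and $(r-1)pq$ and tend to $0$ by \eqref{eq:pr2} together with $pq\to0$. Feeding in the hypothesis $s=q\bigl(1+o(1)\bigr)$ from \eqref{eq:thm_stoch_exp_As}, the leading $pqr$-terms of $v$ and $su$ cancel, leaving $v-su=pqr\cdot o(1)=o(pqr)$. Hence, using $F=O(pqn)$, we get $|\Delta|=O(pqn)\cdot o(pqr)=o(p^2q^2nr)$, so for all large $n$ we have $|\Delta|<\tfrac{\varepsilon}{2}p^2q^2nr$. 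Consequently, for large $n$, the event we must bound is contained in $\bigl\{\,|B_n-\mu|>\tfrac{\varepsilon}{2}p^2q^2nr\,\bigr\}$.

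Next I would estimate this fluctuation with $\gamma:=\tfrac{\varepsilon}{2}p^2q^2nr$. Since $\mu\le Fs=O(pq^2n)$, the bound \eqref{eq:chernoff} yields
\[
  P\bigl(|B_n-\mu|\ge\gamma\bigr)\le 2\exp\!\left(-\frac{\gamma^2}{3\mu}\right)\le 2\exp\!\left(-\frac{\gamma^2}{3Fs}\right)\le 2\exp\!\left(-c\,p^3q^2nr^2\right)
\]
for a constant $c=c(\varepsilon)>0$, because $\gamma^2=\tfrac{\varepsilon^2}{4}p^4q^4n^2r^2$ and $Fs=O(pq^2n)$. (When $\mu\le\gamma$ the two-sided bound \eqref{eq:chernoff} does not apply directly, but then the lower tail is vacuous and the multiplicative upper-tail Chernoff estimate gives the even smaller $2\exp(-c'\,p^2q^2nr)$; since $pr=O(1)$, the exponent is in every case at least a positive constant times $p^3q^2nr^2$.)

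The final step, and the real content, is to verify that this tail is $o(p^2qr/n^2)$, i.e.\ that $p^3q^2nr^2$ outgrows $\log\bigl(n^2/(p^2qr)\bigr)$. Here I would extract from \eqref{eq:npq2/logn} that $p$ and $q$ are at worst polynomially small: $pq^2n\gg\log n$ forces $p\ge\log n/(q^2n)\ge\log n/n$ and $q^2\ge\log n/(pn)\ge\log n/n$, so $\log(1/p)$ and $\log(1/q)$ are $O(\log n)$, whence $\log\bigl(n^2/(p^2qr)\bigr)\le 2\log n+2\log(1/p)+\log(1/q)=O(\log n)$ (the term $-\log r\le0$ only helps). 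On the other hand \eqref{eq:pr1} states exactly that $p^3q^2nr^2/\log n\to\infty$, so $p^3q^2nr^2$ dominates this $O(\log n)$ quantity and $2\exp(-c\,p^3q^2nr^2)=o(p^2qr/n^2)$, as required. I expect this comparison to be the main obstacle, as it is the only place where the two growth conditions \eqref{eq:npq2/logn} and \eqref{eq:pr1} on the parameters must be combined to beat the $\log\bigl(n^2/(p^2qr)\bigr)$ arising from the eventual union bound over chunks and moves; the earlier expansion of $\Delta$ and the Chernoff estimate are routine by comparison.
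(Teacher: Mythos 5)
Your proposal is correct and follows essentially the same route as the paper's proof: split off the deterministic bias via the triangle inequality, show it is $o(p^2q^2nr)$ by the Taylor expansions $(1-p)^r=1-pr+o(pr)$ and $(1-pq)^r=1-pqr+o(pqr)$ (valid thanks to \eqref{eq:pr2}), bound the centred binomial fluctuation by Chernoff to get a tail of size $\exp(-c\,p^3q^2nr^2)$, and finally beat $\log(n^2/(p^2qr))=O(\log n)$ using \eqref{eq:pr1} and the polynomial lower bounds on $p$ and $q$ implied by \eqref{eq:npq2/logn}. Your explicit treatment of the case $\gamma\ge\mu$, where the stated two-sided Chernoff bound does not formally apply, is a small point of added care that the paper's proof passes over.
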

\begin{proof}
	Let us abbreviate $F=F_n$, $B=B_n$, $p=p_n$, $q=q_n$, $r=r_n$ and $s=s_n$. Thus, we want to prove that
	\[
	P := P\left(\big| B + F(1-s) - F(1-pq)^r \big| > \varepsilon p^2 q^2 nr \right) = o(p^2 q r/n^2).
	\]
	We first note that the expected value $E(B) = Fs(1-p)^r$. Using the triangle inequality
	$|B+F(1-s)-F(1-pq)^r| \le |B-E(B)| + |E-F(1-pq)^r+F(1-s)|$ we obtain
	\[
	P \le P(|B-E(B)| > \varepsilon p^2 q^2 nr - |E(B)-F(1-pq)^r+F(1-s)|).
	\]
	By the Chernoff bound \eqref{eq:chernoff}
	we get
	\begin{equation}
	\label{lem:exp}
	P \le 2\exp\left(- \frac{(\varepsilon p^2 q^2 nr - |E(B)-F(1-pq)^r+F(1-s)|)^2}{3E(B)} \right).
	\end{equation}
	For the indices $n$ for which $r_n>1$ we have $rp \le 2(r-1)p \rightarrow 0$
	and hence
	\begin{equation}
	\label{eq:pqr}
	(1-p)^r = 1-pr + o(pr) \quad \text{and}\quad (1-pq)^r = 1-pqr + o(pqr).
	\end{equation}
	For the indices $n$ for which $r_n=1$, the relations in \eqref{eq:pqr} are trivially true.
	
	This means
	\begin{align*}
	|E(B)-&F(1-pq)^r+F(1-s)| = |Fs(1-p)^r-F(1-pq)^r+F(1-s)| \\
	&= |Fs(1-pr+o(pr))-F(1-pqr+o(pqr))+F(1-s)| \\
	&= F\Big( pr(q-s) + s\cdot o(pr) + o(pqr) \Big) \\
	&= o(Fpqr) = o(p^2 q^2 nr). & \text{(by }\eqref{eq:thm_stoch_exp_As}\text{)}
	\end{align*}
	Thus the numerator in \eqref{lem:exp} can be written 
	$[(\varepsilon+o(1)) p^2 q^2 nr]^2$.
	By the assumptions in \eqref{eq:thm_stoch_exp_As}, the denominator 
	in \eqref{lem:exp} can be written
	\[
	3E(B) = 3Fs(1-p)^r = 3\cdot O(pqn) \cdot q(1+o(1)) \cdot O(1) = O(p q^2 n). 
	\]
	Putting these together, the bound \eqref{lem:exp} on $P$ can be written
	\[
	-\frac{1}{\log P} = O\left( \frac{O(p q^2 n)}{[(\varepsilon+o(1))p^2 q^2 nr]^2} \right) = O\left( \frac{1}{p^3 q^2 n r^2} \right) = o\left( \frac{1}{\log n} \right)
	\]
	where \eqref{eq:pr1} was used in the last step.
	Since $pqnr\to\infty$ (also by \eqref{eq:pr1}) and $pqr\to 0$ (by \eqref{eq:pq} and \eqref{eq:pr2}), we have 
	$\frac{1}{pqr} = o(n)$ and hence $\log\frac{1}{pqr} = o(\log n)$. Therefore
	\[
	-\frac{1}{\log P} = o\left( \frac{1}{\log n + \log\frac{1}{pqr}} \right) = 
	o\left( \frac{1}{\log\frac{n}{pqr}} \right).
	\]
	From this follows
	\[
	\log P=o\left(\log\frac{pqr}{n}\right) = o\left(\log\frac{p^2qr}{n^2}\right),
	\]
	thus $P=o(p^2 qr/n^2)$.
	%
\end{proof}


Note that Lemma~\ref{lem:rchunk} concerns an  $s$-threshold process, i.e.\ only $r$ steps.
In other words, it asserts that
\begin{equation}
\label{eq:lemrchunkform}
P\left(\big| A_{r+1} - A_1(1-p_nq_n)^{r_n} \big| > \varepsilon p_n^2 q_n^2 n r_n \right) =
o(p_n^2 q_n r_n/n^2),
\end{equation}
where $A_1=O(p_n q_n n)$ is the first pile size in an $r$-chunk and $A_{r+1}=(1-s_n)A_1+B_n$ the last
(see Figure~\ref{fig:exp}). However, the deviation and the probability were chosen in such a way that they can be added
over all $r$-chunks. This is done in Lemma~\ref{lem:mellan} which 
bounds the probability for deviation for the \emph{entire} union process.
Specifically, we will show that, for any $C>0$, the piles in $\Gamma$ formed at most $\frac{C}{pq}$ moves ago, i.e.\ $\Gamma_k$ for $k \le \frac{C}{pq}$, will follow an exponential decay a.a.s. The sizes of the piles formed more than $\frac{C}{pq}$ moves ago ($k>\frac{C}{pq}$) will be shown to be sufficiently small to be close enough to the tail in the exponential limit shape.

\begin{lemma}
	\label{lem:mellan}
	Let $U_1,U_2,\dotsc$ be the pile sizes in an $(r_n,s_n)$-union process corresponding to $\mathscr{B}(n,p_n,q_n)$, where the initial pile size is $U_1 = O(p_n q_n n)$, and $r_n$ is given by 
	\eqref{eq:rn} and $s_n=q_n(1+o(1))$. Let $M=\lceil n^2/p_n \rceil$. Then
	\[
	\forall C,\varepsilon>0: \forall k<\tfrac{C}{p_n q_n} : P(|U_{k+1}-U_1(1-p_n q_n)^k| > \varepsilon p_n q_n n) = o(1/M) = o(p_n/n^2).
	\]	
\end{lemma}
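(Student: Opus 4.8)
The plan is to exploit the piecewise structure of the union process: I would fix $C,\varepsilon>0$ and $k<C/(p_nq_n)$, locate the index $k+1$ inside its $r_n$-chunk, and control the deviation by combining a within-chunk estimate from Lemma~\ref{lem:rchunk} with a chunk-to-chunk recursion. Abbreviating $p=p_n$, $q=q_n$, $r=r_n$, I write $k=j(r+1)+(t-1)$ with $0\le j$ and $1\le t\le r+1$, so that $U_{k+1}=A_t^{[s,\,\Gamma_{j(r+1)+1}]}$ is the $t$-th pile produced by the threshold process started from the chunk's anchor $\Gamma_{j(r+1)+1}$. Since every chunk begins at a solitaire pile formed within the last $M$ moves, Lemma~\ref{lem:variance} gives that each anchor has size $O(pqn)$ a.a.s., so Lemma~\ref{lem:rchunk} applies to every chunk. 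I would then split
\begin{equation*}
U_{k+1}-U_1(1-pq)^k
= \bigl(U_{k+1}-\Gamma_{j(r+1)+1}(1-pq)^{t-1}\bigr)
+(1-pq)^{t-1}\bigl(\Gamma_{j(r+1)+1}-U_1(1-pq)^{j(r+1)}\bigr),
\end{equation*}
isolating the within-chunk error from the deviation $E_j:=\Gamma_{j(r+1)+1}-U_1(1-pq)^{j(r+1)}$ of the anchor from pure exponential decay.

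The first bracket is exactly the quantity bounded in \eqref{eq:lemrchunkform}: running the threshold process for $t-1\le r$ steps only sharpens the Chernoff estimate in the proof of Lemma~\ref{lem:rchunk}, since fewer steps leaves the target deviation $\varepsilon'p^2q^2nr$ unchanged while keeping $E(B)=O(pq^2n)$, so the first bracket is at most $\varepsilon'p^2q^2nr$ outside an event of probability $o(p^2qr/n^2)$, for any fixed per-chunk budget $\varepsilon'$. For the anchors I would establish a contraction recursion for $E_j$. Extending the threshold process from $\Gamma_{j(r+1)+1}$ by one additional step (permissible because $p(r+1)\to 0$) and squeezing the next anchor $\Gamma_{(j+1)(r+1)+1}$ between the $s=q$ overestimate and the $s=q(1+2pr)$ underestimate via Lemma~\ref{lem:combinatorial}, both endpoints concentrate at $\Gamma_{j(r+1)+1}(1-pq)^{r+1}$ up to $O(\varepsilon'p^2q^2nr)$, so that
\[
E_{j+1}=(1-pq)^{r+1}E_j+\delta_j,\qquad |\delta_j|\le \varepsilon'p^2q^2nr .
\]
Because $1-(1-pq)^{r+1}\sim (r+1)pq$, solving this geometric recursion yields $|E_j|\le \varepsilon'p^2q^2nr/\bigl((r+1)pq\bigr)(1+o(1))=O(\varepsilon'pqn)$, so the damping by $(1-pq)^{r+1}<1$ prevents the anchor errors from growing with $j$.

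Combining the two brackets, the total deviation for the fixed $k$ is $O(\varepsilon'pqn)$; taking the per-chunk budget $\varepsilon'=\varepsilon/(cC)$ for a suitable constant $c$ makes it at most $\varepsilon pqn$, as required. For the probability, I would take a union bound over the chunks $0,1,\dots,j$ below $k$; there are at most $\lceil C/(pq(r+1))\rceil=O\bigl(1/(pqr)\bigr)$ of them, and each contributes a failure probability $o(p^2qr/n^2)$ (from the within-chunk bound and the one-extra-step squeeze), so the total is $o\bigl(Cp/n^2\bigr)=o(p/n^2)$, which equals $o(1/M)$ since $M=\lceil n^2/p\rceil$. I expect the chunk-to-chunk chaining to be the main obstacle: because the union process re-anchors to a fresh solitaire pile at every chunk boundary rather than continuing the previous threshold process, the per-chunk estimates cannot simply be concatenated, and the whole argument hinges on turning the one-extra-step squeeze into the contraction $E_{j+1}=(1-pq)^{r+1}E_j+\delta_j$ and verifying that the accumulated, damped errors stay of order $\varepsilon pqn$ rather than growing linearly in the number of chunks.
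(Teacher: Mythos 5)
Your overall architecture---split the index range into $r$-chunks, control each chunk with Lemma~\ref{lem:rchunk}, chain the anchor errors across chunks, and union-bound the $O\bigl(1/(p_nq_nr_n)\bigr)$ per-chunk failure probabilities of size $o(p_n^2q_nr_n/n^2)$ to get $o(p_n/n^2)=o(1/M)$---is exactly the paper's. The differences lie in the chaining. The paper iterates the triangle inequality
\[
|U_{k+r+1}-U_1(1-pq)^{k+r}|\le |U_{k+1}-U_1(1-pq)^{k}|+|U_{k+r+1}-U_{k+1}(1-pq)^{r}|,
\]
bounds the second term by $\varepsilon' p^2q^2rn$ via \eqref{eq:lemrchunkform}, and lets the errors accumulate \emph{additively}: after $d\le\lfloor C/(pqr)\rfloor$ chunks the total is $d\varepsilon' p^2q^2rn\le \varepsilon pqn$ with $\varepsilon'=\varepsilon/C$. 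This is precisely why the deviation in Lemma~\ref{lem:rchunk} was pre-scaled by the number of chunks, as the paper remarks just before that lemma; your geometric contraction $E_{j+1}=(1-pq)^{r+1}E_j+\delta_j$ buys essentially nothing (it improves $C\varepsilon'pqn$ to $O(\varepsilon'pqn)$, which is immaterial after choosing $\varepsilon'$). Your handling of $k$ strictly inside a chunk, by re-running the Chernoff estimate for $t-1<r$ steps, is fine; the paper instead uses monotonicity of the $U_i$ together with $|U_1(1-pq)^{(d+1)r}-U_1(1-pq)^{dr}|=O(pqn)\cdot O(pqr)<\varepsilon pqn$.

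The step that does not go through as written is your justification of the anchor recursion. You derive $\Gamma_{(j+1)(r+1)+1}\approx(1-pq)^{r+1}\Gamma_{j(r+1)+1}$ by running the threshold process started at $\Gamma_{j(r+1)+1}$ for one extra step and squeezing with Lemma~\ref{lem:combinatorial}. But the index in $\Gamma$ records \emph{age}: $\Gamma_{(j+1)(r+1)+1}$ is not the pile $\Gamma_{j(r+1)+1}$ observed $r+1$ moves later, it is a different, older pile in the same snapshot, so Lemma~\ref{lem:combinatorial} (which compares the time-evolution of a single pile with its threshold processes) cannot squeeze the one between estimates of the other. The combinatorial squeeze belongs to the proof of Theorem~\ref{thm:main_new}, where the union process is compared to the actual solitaire; Lemma~\ref{lem:mellan} is a statement about the union process alone, and the paper proves it by chaining the triangle inequality through the $U_i$ themselves rather than through the solitaire dynamics. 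You are right that the re-anchoring at chunk boundaries is the delicate point, but the way to handle it is to keep the recursion inside the union process (where the anchors are, by definition, the given $O(p_nq_nn)$ quantities to which Lemma~\ref{lem:rchunk} applies), not to re-derive the anchors' behaviour from the solitaire.
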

\begin{proof}
	As in the proof of Lemma~\ref{lem:rchunk}, for the simplicity of notation we do not indicate in $p$, $q$, $r$ and $s$
	the dependence on $n$.
	Let $C,\varepsilon>0$ and $\varepsilon'=\varepsilon/C$.
	By the triangle inequality,
	\begin{align*}
	|U_{k+r+1} - U_1(1&-pq)^{k+r}| \\
	& \le |U_{k+1} - U_1(1-pq)^{k}|(1-pq)^r + |U_{k+r+1} - U_{k+1}(1-pq)^r| \nonumber \\
	& \le |U_{k+1} - U_1(1-pq)^k| + |U_{k+r+1} - U_{k+1}(1-pq)^r|.
	\end{align*}
	Lemma~\ref{lem:rchunk} is now applicable for the first pile in each $r$-chunk
	(since $U_1 \ge U_2 \ge \dotsb$ and $U_1 = O(p_n q_n n)$), so by its formulation \eqref{eq:lemrchunkform},
	$|U_{k+r+1} - U_{k+1}(1-pq)^r| < \varepsilon' p^2 q^2 r n$
	with probability ${1-o(p^2qr/n^2)}$. Thus,
	\begin{equation}
	\label{eq:rec}
	|U_{k+r+1} - U_1(1-pq)^{k+r}| < |U_{k+1} - U_1(1-pq)^{k}| + \varepsilon' p^2 q^2 r n
	\end{equation}
	with probability $1-o(p^2qr/n^2)$.	
	We now note that the first term in the right hand side has the same form as the left hand side, only shifted with $r$ piles. Thus, by induction it follows that, for any positive integer $d$, we have
	\[
	|U_{dr+1}-U_1(1-pq)^{dr}| < d \varepsilon' p^2 q^2 r n
	\]
	with probability $1-o(p^2qr/n^2)$.
	Thus, adding the probabilities for deviation for $k=r,2r,\dotsc,\eta r$, where $\eta=\lfloor \tfrac{C}{pqr} \rfloor$ we get
	\begin{align}
	\label{eq:rmult}
	P\bigl(\forall k\in\{r,2r,\dotsc,\eta r\} & : |U_{k+1}-U_1(1-pq)^k|>\eta \varepsilon' p^2 q^2 rn \ge \varepsilon pqn \bigr) \nonumber \\
	&= \eta \cdot o(p^2qr/n^2) = o(p/n^2).
	\end{align}
	We have thereby proved the claim in the lemma for $k=r,2r,\dotsc,\eta r$. If $k$ is not a multiple of $r$, suppose
	$dr < k < (d+1)r$ for some positive integer $d$. 	
	Then, since $pqr\to 0$ as $n\to\infty$ (which follows from \eqref{eq:npq2/logn} and \eqref{eq:rn}), we have $(1-pq)^r=1-pqr+o(pqr)$ and hence
	\[
	|U_1(1-pq)^{(d+1)r}-U_1(1-pq)^{dr}| = O(pqn)(1-pq)^{dr}|pqr+o(pqr)| < \varepsilon pqn.
	\]
	The lemma then follows by \eqref{eq:rmult} and the fact that ${U}_{dr} \le {U}_k \le {U}_{(d+1)r}$.
\end{proof}	

\newpage
\section{Proof of Theorem~\ref{thm:main_new}}
Below follows the proof of Theorem~\ref{thm:main_new}, stated in Section~\ref{sec:exp-limit}.

\begin{proof}
First, as in the previous section, let us consider $\mathscr{B}(n,p_n,q_n)$ as a process on $\mathcal{W}(n)$ rather than on $\mathcal{P}(n)$, and let $\alpha^{(0)} \in \mathcal{W}(n)$ be the weak composition representing the initial configuration of cards in the solitaire. Let also $M$ and $D$ be given by \eqref{eq:DM}.


%
%


Let $(r_n)_n$ be the sequence of positive integers given by \eqref{eq:rn} and let $(s_n)_n$ be the sequence $s_n=q_n(1+2p_n r_n)$.
By Lemma~\ref{lem:M} applied on $\alpha^{(D)}$, all piles present in $\alpha^{(D)}$ have
disappeared in $\Gamma := \alpha^{(D+M)}$ a.a.s. Let $\Gamma_k = \alpha^{(D+M)}_k$ for $1\le k \le M$ be the number of cards in the pile that was formed $k$ moves ago. By Lemma~\ref{lem:variance}, each of these piles had size $O(np_nq_n)$ a.a.s.\ when they were formed. Let $F_n:=O(np_nq_n)$ be a sequence such that $F_n s_n$ is an integer for each $n$. Let $0<\varepsilon<1$ and choose $C_n$ such that
$C_n > \frac{p_n q_n \log\varepsilon}{\log(1-p_n q_n)}$.

Let $\check{U}_1,\check{U}_2,\dotsc$ be the pile sizes in the $(r_n,s_n)$-union process with initial pile size $\Gamma_1$.
Using the fact that $p_n q_n r_n \to \ 0$, it is a straightforward computation to show that
$s_n=q_n(1+2p_nr_n)$ implies $(1-q_n)((1-p_n r_n)^{r_n}-\varepsilon p_n q_n r_n) > 1-s_n$
and therefore also
\[
(1-q_n)\big(A(1-p_nr_n)^{r_n}-\varepsilon A p_nq_nr_n \big) > (1-s_n)A \quad \text{for any }A>0.
\]
By Lemma \ref{lem:rchunk}, the probability that $\check{U}_1(1-p_nr_n)^{r_n} - \varepsilon \check{U}_1 p_nq_nr_n < \check{U}_{r_n+1}$
is $P_1:=1-o(p_n^2 q_nr_n/n^2)$. Thus, with probability $P_1$ we have $(1-q_n)\check{U}_{r_n+1} > (1-s_n)\check{U}_1 \ge \check{U}_1-\lceil s\check{U}_1 \rceil$ so by Lemma~\ref{lem:combinatorial}(ii), the pile sizes $\check{U}_1,\check{U}_2,\dotsc,\check{U}_{r_n+1}$ in the first $r$-chunk of the $(r_n,s_n)$-process underestimate the pile sizes $\Gamma_1,\Gamma_2,\dotsc,\Gamma_{r_n+1}$ with probability $P_1$.
In the next chunk, we have a new absolute threshold $s\Gamma_{r_n+2}=s\check{U}_{r_n+2}$. Since $\Gamma_{r_n+2} \le \Gamma_1$,
we have $(1-q_n)\check{U}_{2r_n+2} > (1-s_n)\check{U}_{r_n+2}$ with probability at least $P_1$, making Lemma~\ref{lem:combinatorial}(ii)
applicable also for the second chunk to conclude that $\check{U}_{r_n+2},\dotsc,\check{U}_{2r_n+2}$ underestimate $\Gamma_{r_n+2},\dotsc,\Gamma_{2r_n+2}$ with probability at least $P_1$. Continuing in the same manner for the first $C_n(p_n q_n r_n)^{-1}$ chunks, we conclude that the $(r_n,s_n)$-union process underestimates the solitaire with high probability:
\[
P(\check{U}_{k} > \Gamma_k \text{ for all } k<\frac{C_n}{p_n q_n})<(1-P_1)C_n(p_n q_n r_n)^{-1}=o(p_n/n^2).
\]

Let $\widehat{U}_1,\widehat{U}_2,\dotsc$ be the pile sizes in the $(r_n,q_n)$-union process with initial pile size $\Gamma_1$. By Lemma~\ref{lem:combinatorial}(i) (with $s_n=q_n$), the $(r_n,q_n)$-union process surely overestimates the solitaire in each chunk.

Taking the results for the $(r_n,s_n)$-union process and the $(r_n,q_n)$-union process together we have
\[
P(\check{U}_k \le \Gamma_k \le \widehat{U}_k \text{ for all } k<\frac{C_n}{p_n q_n}) > 1-o(p/n^2).
\]
Now, applying Lemma~\ref{lem:mellan} to both the pile sizes $\check{U}_k$ and to the pile sizes $\widehat{U}_k$ and using the squeeze theorem, we obtain
\[
\forall \varepsilon>0: \forall k<\tfrac{C_n}{p_n q_n} : P(|\Gamma_k-\Gamma_1(1-p_n q_n)^k| > \varepsilon np_nq_n) < o(p_n/n^2).
\]
Thus, the probability that $|\Gamma_k-\Gamma_1(1-p_nq_n)^k| < \varepsilon np_nq_n$ for the first $\tfrac{C_n}{p_nq_n}$ piles
throughout \emph{all} $M$ moves from $\alpha^{(D)}$ to $\alpha^{(D+M)}$ is $1-M\cdot o(1/M) = 1-o(1)$.

For piles $k>\tfrac{C_n}{p_n q_n}$, the exponential decrease (with decay factor $1-pq$) in pile size will yield piles smaller than $np_nq_n(1-p_nq_n)^{\frac{C_n}{p_n q_n}} < \varepsilon np_nq_n$ (by our choice of $C_n$).
Thus, the pile sizes themselves are below $\varepsilon np_nq_n$. 


In summary; playing sufficiently many moves of $\mathscr{B}(n,p_n,q_n)$, the resulting composition diagram will a.a.s.\ be arbitrarily close to the boundary diagram of the composition $\alpha$ where $\alpha_k=np_nq_n(1-p_nq_n)^{k-1}$ for all $k=1,2,\dotsc$. The corresponding boundary function is
$\rescaled{\alpha}(x) = np_nq_n(1-np_nq_n)^{\lfloor x \rfloor}$. The corresponding rescaled
boundary function, with the given scaling factor $a_n=(p_n q_n)^{-1}$, is
\[
\rescaleds{a_n}{\alpha}(x) = (1-p_nq_n)^{\frac{x}{p_nq_n}} \to e^{-x}
\]
since $p_nq_n \to 0$ as $n\to\infty$.

Setting $m:=D+M$, and letting $\pi_n^m$ denote the probability distribution on $\mathcal{W}(n)$ for $\alpha^{(m)}$, we have
\[
\lim_{n\rightarrow\infty}\pi_n^m\left\lbrace \alpha\in\mathcal{W}(n): |\rescaleds{a_n}{\alpha}(x)-e^{-x}|<\varepsilon\right\rbrace=1,
\]
for all $\varepsilon>0$ and all $x>0$, in accordance with \eqref{eq:limitshape_def}.
By virtue of Lemma~\ref{lem:reordering_cor}, the same limit shape holds when configurations in the solitaire $\mathscr{B}(n,p_n,q_n)$ are represented by partitions $\mathcal{P}(n)$.



Since $\pi_{n,p_n,q_n}$ is the stationary distribution of the Markov chain $(\lambda^{(0)},\lambda^{(1)},\dotsc)$, if we start with a partition $\lambda^{(0)}$ sampled from $\pi_{n,p_n,q_n}$ and play $m$ moves, the resulting partition $\lambda^{(m)}$ will also be sampled from $\pi_{n,p_n,q_n}$. Thus, the theorem follows by choosing $\lambda^{(0)}$ as a stochastic partition sampled from the stationary distribution.
\end{proof}

\section{Conjectures}
\label{sec:conj}
Recall that Theorem~\ref{thm:main_new} was proved with $\mathscr{B}(n,p_n,q_n)$ being considered
a process on $\mathcal{W}(n)$, and by virtue of Lemma~\ref{lem:reordering_cor} it also holds in
$\mathcal{P}(n)$. We imposed the condition $\frac{p_n q_n^2 n}{\log n}\to\infty$.
Here we conjecture that the weaker condition $p_n q_n^2 n \to\infty$ suffices in order for
Theorem~\ref{thm:main_new} to hold in $\mathcal{P}(n)$. 

\begin{conjecture}
	Theorem~\ref{thm:main_new} holds also when the condition $np_nq_n^2/{\log n}\to\infty$ is replaced by the weaker condition $np_nq_n^2\to\infty$.
\end{conjecture}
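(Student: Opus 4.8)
The plan is to abandon the pile-by-pile, union-bound-over-time strategy of Section~\ref{sec:exp-limit} — which is exactly what spends the extra $\log n$ — and instead prove the limit shape directly in $\Par(n)$ through a single aggregate statistic. First I would locate precisely where the stronger hypothesis is consumed. Every deviation estimate in Lemmas~\ref{lem:variance}, \ref{lem:rchunk} and \ref{lem:mellan} is a Chernoff bound of the form $\exp(-c\,p_nq_n^2n)$ (for the pile count) or $\exp(-c\,p_nq_nn)$ (for a single pile), and each is summed against a union factor polynomial in $n$, ultimately $M=\lceil n^2/p_n\rceil$, the horizon needed in Lemma~\ref{lem:M} to erase a worst-case initial pile of size up to $n$. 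Beating a polynomial-in-$n$ union factor with such an exponential forces the exponent past a constant multiple of $\log n$, i.e.\ $p_nq_n^2n\gtrsim\log n$. The observation driving the conjecture is that this union factor is an artifact of tracking every pile separately in $\mathcal{W}(n)$: in $\Par(n)$, under the pointwise convergence demanded by \eqref{eq:limitshape_def}, no such factor is required.

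Concretely, for fixed $x>0$ put $h=e^{-x}p_nq_nn$ and let $N_{\ge h}(\lambda)=|\{i:\lambda_i\ge h\}|$. Since $\lambda_j\ge h$ iff $N_{\ge h}(\lambda)\ge j$, the pointwise statement $\rescaleds{a_n}{\lambda}(x)\to e^{-x}$ is equivalent to $p_nq_n\,N_{\ge h}\to x$ in probability — a claim about one height for each $x$, with no union over heights or over piles. I would prove this in two parts. For the mean, $E(N_{\ge h})=(1+o(1))\,x/(p_nq_n)$ follows because a pile created $k$ moves ago has mean size $p_nq_nn(1-p_nq_n)^k$ (the content of Lemmas~\ref{lem:variance}–\ref{lem:mellan}, but now needed only in expectation), so it exceeds $h$ with probability near $1$ for $k\ll x/(p_nq_n)$ and near $0$ for $k\gg x/(p_nq_n)$; the transition window has width $o(1/(p_nq_n))$ precisely because a single pile's size has relative fluctuation $\sim h^{-1/2}\to 0$ once $p_nq_nn\to\infty$. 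For concentration, the key structural fact is that, \emph{conditioned on the candidate counts being $\kappa^{(t)}=q_nn(1+o(1))$}, distinct piles evolve independently: each pile's trajectory is driven only by the picking variables internal to that pile, the sole coupling running through the global count $\kappa^{(t)}$. Writing $N_{\ge h}=\sum_P \mathbf{1}[\mathrm{size}(P)\ge h]$ as a sum of (nearly) independent indicators gives $\mathrm{Var}(N_{\ge h})\le E(N_{\ge h})$, whence $P\!\left(|N_{\ge h}-E(N_{\ge h})|>\delta E(N_{\ge h})\right)\le 2\exp(-c\delta^2 x/(p_nq_n))\to 0$ using only $p_nq_n\to 0$.

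The virtue of this reorganization is that the weaker hypothesis is \emph{exactly} what the three ingredients demand, with no residual $\log n$. Concentration needs only $E(N_{\ge h})=x/(p_nq_n)\to\infty$, i.e.\ \eqref{eq:pq}. A sharp threshold and the correct mean need only $h=e^{-x}p_nq_nn\to\infty$, i.e.\ $p_nq_nn\to\infty$, which already follows from $p_nq_n^2n\to\infty$ since $q_n\le 1$. The independence approximation requires the candidate count to stay $\kappa^{(t)}=q_nn(1+o(1))$, i.e.\ the total rounding effect $R<N(\alpha^{(t)})$ to be $o(q_nn)$; but at stationarity the typical number of piles is one per move of a pile's expected lifetime, $\Theta\!\left(\frac{\log(p_nq_n^2 n)+1}{p_nq_n}\right)$, so $N/(q_nn)=\frac{\log(p_nq_n^2n)+1}{p_nq_n^2n}\to 0$ \emph{if and only if} $p_nq_n^2n\to\infty$ (because $\log t/t\to 0$). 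Thus $p_nq_n^2n\to\infty$ is not merely sufficient here but is the natural threshold at which the candidate count stabilizes. Note that the single-move, single-pile versions of Lemmas~\ref{lem:variance} and \ref{lem:D} survive under the weaker hypothesis; it is only their uniform-over-$M$-moves strengthenings, and the $\Theta(\log n/(p_nq_n))$ piles injected during the clearing phase of Lemma~\ref{lem:D}, that consume the extra $\log n$ — and at stationarity no worst-case initial configuration must be erased, so the typical lifetime suffices in place of the clearing bound.

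The hard part will be making the independence of piles rigorous, since the creation sizes \emph{are} genuinely correlated through $\kappa^{(t)}$, which in turn depends on every surviving pile. The plan is to couple the true chain to an idealized system in which each move injects an independent $\Bin(\lceil q_nn\rceil,p_n)$ pile that thereafter decays by its own $q$-process, and to bound the coupling error by the fluctuation of $\kappa^{(t)}$ about $q_nn$ over the $O(1/(p_nq_n))$ moves that can influence piles of size $\ge h$; this fluctuation must be re-derived at stationarity from the typical-lifetime estimate above rather than from the clearing argument of Lemma~\ref{lem:D}. A secondary difficulty is that in the delicate regime $p_nq_nn=o(\log n)$, where the old proof breaks, a few atypically large piles of size $\gg p_nq_nn$ do survive far longer than $x/(p_nq_n)$ moves; one must check that, being only logarithmically many, they contribute $o(E(N_{\ge h}))$ and perturb neither the mean nor the variance. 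I expect the coupling bound — quantifying how the feedback of a pile's own size on later candidate counts decorrelates across piles — to be the genuine obstacle, with everything else reducing to the single-pile decay and candidate-count estimates already present in Lemmas~\ref{lem:variance} and \ref{lem:D}.
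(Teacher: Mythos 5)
You should first be clear that the statement you are addressing is stated in the paper as a \emph{conjecture}: the authors give no proof, only the heuristic counterexample ($q_n=1$, $p_nn=\log\log n$) showing that the unsorted version fails, together with the remark that simulations suggest sorting rescues it. So there is no proof in the paper to compare yours against, and what you have written is a research plan rather than a proof. That said, your diagnosis is sound and goes beyond the paper's own discussion: you correctly locate the $\log n$ in the union bounds over the $M=\lceil n^2/p_n\rceil$ moves and over the up-to-$n$ initial piles in Lemmas~\ref{lem:M} and~\ref{lem:D}, your reduction of the pointwise limit-shape condition \eqref{eq:limitshape_def} to the single level statistic $N_{\ge h}$ via the duality $\lambda_j\ge h\Leftrightarrow N_{\ge h}\ge j$ is exactly the right way to exploit sorting (a rogue pile becomes an index shift of size $o(1/(p_nq_n))$ rather than a pointwise violation, which is consistent with the paper's own counterexample where the expected number of rogues is $\tfrac{n}{\log n\log\log n}=o(1/p_n)$), and your lifetime computation $\Theta\bigl(\tfrac{\log(p_nq_n^2n)+1}{p_nq_n}\bigr)$ correctly identifies $p_nq_n^2n\to\infty$ as the natural threshold for $N=o(q_nn)$.

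Two gaps remain, and they are the substantive ones. First, there is a circularity you do not resolve: your estimate of the stationary pile count $N$ presupposes that piles are created with size $\approx p_nq_nn$, which presupposes $\kappa^{(t)}=q_nn(1+o(1))$, which presupposes $N=o(q_nn)$. The paper breaks this circle with the clearing argument of Lemma~\ref{lem:D}, which starts from a worst-case configuration and pays a union bound over $n$ piles and $M$ moves --- precisely the $\log n$ you must avoid. Working ``at stationarity'' does not dissolve the circle by itself; you would need an independent bootstrap, e.g.\ a monotone domination of the pile-count process or an $E_\pi[N]=o(q_nn)$ bound proved by summing single-pile survival probabilities that are themselves valid under a crude a priori bound on $\kappa$ (note $\kappa\le n$ always, which at least gives creation sizes $O(p_nn)$ and might seed an iteration). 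Second, the claimed near-independence of the indicators in $N_{\ge h}$ is asserted, not established: every pile's survival feeds back into $\kappa^{(t)}$ and hence into the creation size of every later pile, so even conditionally on the event $\{\kappa^{(t)}=q_nn(1+o(1))\ \forall t\}$ the indicators are dependent, and conditioning on that event itself distorts the single-pile laws. You flag the coupling to an idealized system with i.i.d.\ $\Bin(\lceil q_nn\rceil,p_n)$ injections as the hard step, and I agree --- but until that coupling error is bounded, the variance estimate $\mathrm{Var}(N_{\ge h})\le E(N_{\ge h})$ and everything downstream of it is unsupported. As it stands the proposal is a plausible and well-motivated attack on the conjecture, not a proof of it.
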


The reason for this conjecture can be 
understood by considering the example $q_n=1$ and $p_n n=\log(\log n)$. For this example
it is easy to prove that there is no limit shape when sorting is not performed. 
Since $q_n=1$, the number of picked cards in each move and thus the expected size
of a new pile is $\Bin(n,p_n)$ with expected value $np_n$ and standard deviation $\sigma \approx \sqrt{np_n}$.
A pile of size $np_n$ will after $1/p_n$ moves have the expected size $np_n(1-p_n)^{1/p_n} \to e^{-1}np_n$
as $n\to\infty$.

Thus, the probability for a ``visible'' deviation (i.e.\ greater than $d=\sqrt{np_n}$ standard deviations)
is $P(\text{deviation}\ge d\sigma)
= e^{-np_n}$, so for $1/p_n$ piles,
the probability for a visible deviation anywhere is
$\frac{e^{-np_n}}{p_n} = \frac{n}{\log n \log(\log n)} \to \infty$ as $n\to\infty$, i.e.\ the 
expected number of such large deviations tends to infinity as $n$ tends to infinity.
This makes it impossible to achieve a convergence in probability towards a limit shape.
However, from simulations we have reason to believe that the process converges towards a limit shape
when sorting is performed.

Further, recall from Section~\ref{sec:regimes} the other regimes $np_nq_n^2\to 0$ and $np_nq_n^2\to C$ from some constant $C>0$. We conjecture that the limit shapes in the $p_n$-random $q_n$-proportion Bulgarian solitaire in these regimes are the same as in the deterministic $q$-proportion Bulgarian solitaire developed in \cite{EJS}.

\begin{conjecture}
	If  $p_n q_n^2 n \to 0$ as $n\to\infty$, the limit shape of the $p_n$-random $q_n$-proportion Bulgarian solitaire is triangular.
\end{conjecture}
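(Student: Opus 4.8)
The plan is to reduce the problem to ordinary $p_n$-random Bulgarian solitaire and then establish a triangular limit shape there. The key observation is that the hypothesis $p_n q_n^2 n\to 0$ is exactly the statement that the largest pile satisfies $q_n\lambda_1\to 0$: one expects $\lambda_1\approx\sqrt{2p_n n}$, so that $q_n\lambda_1\approx\sqrt{2p_n q_n^2 n}\to 0$. Once every nonempty pile has size below $1/q_n$, we have $\lceil q_n\alpha_i\rceil=1$ for every pile, i.e.\ each pile contributes exactly one candidate card, so $\mathscr{B}(n,p_n,q_n)$ coincides move-for-move with $p_n$-random Bulgarian solitaire. Thus rounding now \emph{dominates} (the opposite of the exponential regime, where Lemma~\ref{lem:D} made rounding negligible), and the decay of a pile becomes \emph{additive} rather than multiplicative: a pile loses one candidate card with probability $p_n$ per move, so a pile formed $k$ moves ago has expected size $\lambda_1-p_n k$, a \emph{linear} decay producing the triangular profile $\phi(x)=\max(1-x/2,0)$ (normalised to height $1$ and area $1$). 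A self-consistency computation — the new pile has expected size $p_nN$ where $N$ is the number of nonempty piles, and the triangle has area $n$, so $\lambda_1 N=2n$ and $\lambda_1=p_n N$ — pins down $\lambda_1\approx\sqrt{2p_nn}$ and $N\approx\sqrt{2n/p_n}$.

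First I would make the reduction rigorous by a coupling and bootstrap argument. Couple $\mathscr{B}(n,p_n,q_n)$ with $p_n$-random Bulgarian solitaire using the same Bernoulli picks; the two chains agree for as long as $\lambda_1<1/q_n$. One then shows, via the Chernoff bound \eqref{eq:chernoff} applied to the new-pile size $\Bin(\cdot,p_n)$ as in Lemmas~\ref{lem:M}--\ref{lem:variance}, that in $p_n$-random Bulgarian solitaire the largest pile stays $O(\sqrt{p_nn}\,)$ up to lower-order factors a.a.s.\ throughout the $M=\lceil n^2/p_n\rceil$ moves, whence $q_n\lambda_1\to 0$ and the coupling never breaks during this window. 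As in the proof of Theorem~\ref{thm:main_new}, one runs $D$ burn-in moves first so that the initial configuration is forgotten.

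Second I would prove the triangular shape for $p_n$-random Bulgarian solitaire, which is where the real difficulty lies. Using the ordering-by-creation-time representation of Section~\ref{sec:repr_compositions}, the size $S_k$ of the pile formed $k$ moves ago equals its birth size minus $\Bin(k,p_n)$ picks, so $E[S_k]\approx p_n(N-k)$ is linear and already monotone in $k$. However — and this is the crucial contrast with the exponential regime — in the triangular regime the \emph{unsorted} profile need \emph{not} concentrate: the relative fluctuation of a single $S_k$ is of order $(p_nn)^{-1/4}$, which vanishes only when $p_nn\to\infty$, and the scaling factor $1/\lambda_1=1/\sqrt{2p_nn}$ smooths the jumpy diagram only when $\lambda_1\to\infty$, i.e.\ again when $p_nn\to\infty$. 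Hence, unlike Theorem~\ref{thm:main_new}, the result cannot be obtained on $\mathcal{W}(n)$ and fed through Lemma~\ref{lem:reordering_cor}; one must control the \emph{sorted} configuration directly. The natural device is to study, for each threshold $h$, the number $N(h):=\#\{k:S_k>h\}$ of piles exceeding $h$; the sorted profile is its functional inverse, and $N(h)/N\to 1-h/\lambda_1$ would give exactly the triangle. Since $E[N(h)]\approx N-h/p_n$, the task reduces to showing that $N(h)$ concentrates around its mean.

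The main obstacle is precisely this concentration. Were the pile sizes $\{S_k\}$ independent, the bound $\operatorname{Var}N(h)\le N$ would give relative fluctuations $O(N^{-1/2})\to 0$ for free; the difficulty is that the $S_k$ are \emph{correlated}, both through the global constraint $\sum_k S_k=n$ and because each birth size is the number of cards picked from the \emph{entire} current configuration. The authors' single-pile technique (tracking one pile through independent Bernoulli picks, as in Lemma~\ref{lem:rchunk}) is well suited to the exponential regime, where the unsorted diagram already concentrates, but it does not by itself address the joint law of the many correlated small piles that must be sorted here. Establishing that these correlations are weak enough — for instance by a second-moment estimate on $N(h)$ or by a hydrodynamic analysis of the empirical pile-size measure — together with isolating the nondegeneracy hypothesis $p_nn\to\infty$ (without which $\lambda_1$ stays bounded and no limit shape exists), is the heart of what remains and is the reason the statement is only conjectured.
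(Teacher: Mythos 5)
This statement is one of the paper's open conjectures; the paper supplies no proof of it, so there is nothing on the paper's side to compare your argument against. What you have written is a proof plan rather than a proof, and you say so yourself: your concluding sentence concedes that the concentration step ``is the heart of what remains.'' Your reduction step is sound in outline and is surely the intended route: under $p_nq_n^2n\to 0$ the self-consistent largest pile size $\lambda_1\approx\sqrt{2p_nn}$ satisfies $q_n\lambda_1\to 0$, so eventually $\lceil q_n\alpha_i\rceil=1$ for every nonempty pile and $\mathscr{B}(n,p_n,q_n)$ coincides with Popov's $p_n$-random Bulgarian solitaire. Making this rigorous requires the bootstrap you describe --- one must show $\lambda_1=O(\sqrt{p_nn})$ a.a.s.\ \emph{before} one knows the two chains agree, a genuine circularity to untangle rather than a routine application of \eqref{eq:chernoff}. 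You also correctly observe that the conjecture as stated is degenerate unless $p_nn\to\infty$: otherwise $\lambda_1$ stays bounded and the rescaled diagram cannot approach a continuous profile.

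The genuine gap is the second half. Popov's theorem gives the triangle only for \emph{fixed} $p$, so the reduction does not finish the job; one must reprove the triangular limit shape with $p_n\to 0$ allowed. The obstacle you name is the right one, but it is worth locating it precisely: the birth size of the pile created at time $t$ is $\Bin(N_t,p_n)$, where $N_t$ is the \emph{current number of nonempty piles}, a random quantity determined by all earlier birth sizes and picks. In the exponential regime the analogous quantity (the number of candidate cards) is pinned to $q_nn(1+o(1))$ by Lemma~\ref{lem:D} essentially for free, because it is tied to the conserved total $n$; here $N_t$ is not controlled by any conserved quantity and must be shown to concentrate around $\sqrt{2n/p_n}$ by a self-consistency (fixed-point) argument that you gesture at but do not supply. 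Until $N_t$ concentrates, neither the unsorted nor the sorted profile can be controlled, so your proposed remedies (a second-moment bound on the exceedance counts $N(h)$, or a hydrodynamic limit for the empirical pile-size measure) remain the open core of the problem. In short: the plan is plausible and correctly diagnoses the difficulties, but it does not constitute a proof, and the statement remains a conjecture.
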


\begin{conjecture}
	If  $p_n q_n^2 n \to C$ as $n\to\infty$ for some constant $C>0$, the limit shape of the $p_n$-random $q_n$-proportion Bulgarian solitaire is a piecewise linear shape that depends on the value of $C$.
\end{conjecture}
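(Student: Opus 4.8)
The plan is to reuse the time-ordered compositional framework from the proof of Theorem~\ref{thm:main_new}, invoking Lemma~\ref{lem:reordering_cor} at the end to pass from compositions back to partitions, but to replace the exponential-decay analysis by a piecewise-linear one. The crucial point is that in the regime $p_nq_n^2 n\to C$ the rounding in $\lceil q_n h\rceil$ is \emph{not} negligible; on the contrary, it is exactly what produces the $C$-dependent shape. Under the scaling $a_n=(p_nq_n)^{-1}$, a pile of scaled height $y$ has actual size $h\approx y\,p_nq_n n$, so its number of candidate cards is $\lceil q_n h\rceil=\lceil p_nq_n^2 n\, y\rceil\to\lceil Cy\rceil$. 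Thus, unlike in the exponential case, the number of cards eligible to be picked depends only on the band $\big((m-1)/C,\,m/C\big]$ in which the pile's scaled height lies. For the scaled picture to be nondegenerate I would add the natural hypothesis $q_n\to 0$; together with $p_nq_n^2 n\to C$ and $p_n\le 1$ this forces $q_n n\to\infty$, hence $p_nq_n\to 0$ and $p_nq_n n=C/q_n\,(1+o(1))\to\infty$.

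First I would re-establish the analogues of Lemmas~\ref{lem:M}, \ref{lem:D} and \ref{lem:variance}: after $D$ moves the old piles are gone, the number of piles is $o(q_n n)$, and the freshly formed pile is concentrated near $p_nq_n n$, so that scaled height $1$ is the correct starting value. Next I would track the expected evolution of a single pile. Writing $y_j$ for its scaled height $j$ moves after creation, the expected one-step loss is $p_n\lceil Cy_j\rceil$ cards, i.e.\ a scaled-height decrement $p_n\lceil Cy_j\rceil/(p_nq_n n)=\lceil Cy_j\rceil/(q_n n)$. Passing to the scaled rank $x=k\,p_nq_n$ (the pile formed $k$ moves ago sits at horizontal coordinate $\approx k/a_n$), the discrete dynamics converge to the initial value problem
\[
\frac{dy}{dx}=-\frac{\lceil Cy\rceil}{C},\qquad y(0)=1 .
\]
Its solution is piecewise linear: on each band $y\in\big((m-1)/C,\,m/C\big]$ the slope is the constant $-m/C$, so the graph is convex, flattening toward the right, with breakpoints where $y$ crosses the multiples of $1/C$ and with bounded support. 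This is the conjectured $C$-dependent shape $\phi_C$; letting $C\to\infty$ gives $y'=-y$ and hence $e^{-x}$, while $C\to 0$ collapses all bands into the single slope $-1/C$ and hence a triangle, matching the two extreme regimes of Section~\ref{sec:regimes}.

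To turn this heuristic ODE into an a.a.s.\ statement I would adapt the threshold-process and chunking machinery of Lemmas~\ref{lem:combinatorial}, \ref{lem:rchunk} and \ref{lem:mellan} band by band. Within a single band the candidate set is a fixed integer number $m$ of topmost cards, so the per-move removal count is $\Bin(m,p_n)$ and the cards removed over $t$ moves concentrate (by the Chernoff bound \eqref{eq:chernoff}) around the linear prediction $mp_n t$, with fluctuation $\sim q_n^{-1/2}$ negligible against the band width $1/q_n$. Squeezing between $s$-threshold over- and underestimates via Lemma~\ref{lem:combinatorial}, and summing the deviations across the $O(1/(p_nq_n))$ piles and the $M$ moves as in Lemma~\ref{lem:mellan}, one would hope to pin each $\Gamma_k$ to $\phi_C(k\,p_nq_n)$ a.a.s.

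The hard part will be exactly the concentration step, which is why this remains a conjecture. Without the $\log n$ safety margin the chunk estimate behind Lemma~\ref{lem:rchunk} cannot be salvaged: its Chernoff exponent is of order $(p_nr_n)^2\,p_nq_n^2 n\to (p_nr_n)^2 C$ (compare \eqref{eq:pr1}), which can tend to infinity only if $p_nr_n\to\infty$, in direct conflict with the requirement $p_nr_n\to 0$ needed to linearize $(1-p_n)^{r_n}$. Moreover, the drift $-\lceil Cy\rceil/C$ is genuinely discontinuous at each multiple of $1/C$, so one must rule out fluctuations repeatedly pushing a pile back and forth across a breakpoint and thereby smearing a slope change over a macroscopic interval of $x$; equivalently, the horizontal coordinate at which each of the finitely many slope changes occurs must itself concentrate. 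The same accumulation phenomenon flagged in the $q_n=1$, $p_n n=\log\log n$ example discussed above shows that a naive union bound over all piles need not close when $p_nq_n^2 n$ is merely bounded. A genuinely new argument would be required, most plausibly a global martingale (Azuma/Freedman-type) bound on the whole pile trajectory of length $\approx (\log C)/(p_nq_n)$ — whose total increment variance is $O(C\log C/q_n)$, relatively $o(1)$ against the pile size $C/q_n$ — rather than the chunkwise estimate, combined with a careful treatment of the breakpoint crossings exploiting the strictly downward drift.
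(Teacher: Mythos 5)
This statement is one of the paper's open conjectures; the paper offers no proof of it (it only remarks that the shape should coincide with the interpolating shapes of the deterministic solitaire in \cite{EJS}), and your submission is likewise a programme rather than a proof --- you say so yourself when you reach the concentration step. So there is no complete argument on either side to compare; what I can assess is whether your heuristic identifies the right shape and the right obstruction.

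The obstruction you name is the right one: with $p_nq_n^2n$ bounded, the Chernoff exponent $(p_nr_n)^2p_nq_n^2n$ of Lemma~\ref{lem:rchunk} cannot be sent to infinity while keeping $p_nr_n\to 0$, and the union bound over all $M$ moves does not close, so a genuinely different concentration argument (and possibly sorting, as in Conjecture~1) is needed. However, your heuristic for the shape itself contains an internal contradiction that changes the answer. You correctly observe that the rounding $\lceil q_nh\rceil$ is \emph{not} negligible in this regime, but you do not carry that observation through to the pile-creation step: the same non-negligible rounding inflates the total number of candidates, $\kappa=\sum_i\lceil q_n\alpha_i\rceil$, above $q_nn$ by a constant factor, and the number of piles is $\Theta(q_nn)$ rather than $o(q_nn)$ (a shape of bounded scaled width $W$ has $\approx W/(p_nq_n)=Wq_nn/C$ nonempty piles). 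Consequently the analogues of Lemmas~\ref{lem:D} and~\ref{lem:variance} fail as you state them, the freshly formed pile concentrates near $p_n\kappa\neq p_nq_nn(1+o(1))$, and the initial condition $y(0)=1$ for your ODE $y'=-\lceil Cy\rceil/C$ is wrong. A quick sanity check: for $C\le 1/2$ your solution is the single linear piece $y=1-x/C$ on $[0,C]$, whose area is $C/2<1$, violating the normalization $\int_0^W y\,dx=1$ forced by the total card count. The drift equation is plausible, but the starting height $Y_0=y(0)$ must be determined self-consistently by the area constraint (for instance $Y_0=\sqrt{2/C}$ in the purely triangular range $C\le 1/2$), not set to $1$; equivalently, the scaling $a_n=(p_nq_n)^{-1}$ no longer agrees with the paper's convention $a_n=n/\lambda_1$ in this regime. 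Any eventual proof would have to establish this self-consistent fixed point, which is an additional difficulty absent from the exponential regime.
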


\providecommand{\bysame}{\leavevmode\hbox to3em{\hrulefill}\thinspace}

\end{document}